\newcolumntype{C}{>{\centering\arraybackslash}X}
\title{Geometry on the lines of spine spaces}
\author{K. Petelczyc and M. \.Zynel}
\def\LineOn(#1,#2){\overline{{#1},{#2}\rule{0em}{1,5ex}}}
\def\copla{\mathrel{\boldsymbol{\pi}}}
\def\copenc{\mathrel{\boldsymbol{\rho}}}
\def\cop{\mathrel{\boldsymbol{\delta}}}
\def\pcopla{\mathrel{\boldsymbol{\Pi}}}
\def\agen#1#2{{\mbox{{\boldmath$[\mkern-4.5mu|$}}{#1}\mbox{{\boldmath$|\mkern-4.5mu]$}}_{{#2}}}}
\def\penc{{\bf p}}
\def\TRG{\boldsymbol{\Delta}}
\def\fixspace{\ensuremath{\goth M}}
\def\eUps{{\Upsilon_{\emptyset}}}
\newcommand{\overbar}[1]{\mkern 2mu\overline{\mkern-2mu#1\mkern-2mu}\mkern 2mu}
\newcommand\cl[1]{\overbar{#1}}
\def\pencpeki{\mathcal P}
\def\peki{\mathcal{P}}
\def\W{{\cal H}}
\def\A{\mathfrak{A}}
\def\S{\mathcal{S}}
\def\T{\mathcal{T}}
\def\M{{\mathfrak M}}
\def\kliki{{\mathscr K}}
\def\PencSpace(#1,#2){{\bf P}_{#2}({#1})}
\def\linesout{{\Lines_{\W}}}
\def\pointsout{{S_{\W}}}
\def\semibundle{{\mathscr B}}
\DeclareMathOperator{\LinesOf}{L}
\DeclareMathOperator{\starof}{S}
\DeclareMathOperator{\topof}{T}
\def\bLinesOf(#1){\LinesOf\bigl(#1\bigr)}
\def\klamerka(#1, #2){\Lambda_{#2}(#1)}
\def\bklamerka(#1, #2){\Lambda_{#2}\bigl(#1\bigr)}
\def\aflines{{\cal A}}
\newenvironment{cmath}{%
  \par
  \smallskip
  \centering
  $
}{%
  $
  \par
  \smallskip
  \csname @endpetrue\endcsname
}
\def\ecase#1{\noindent$\bullet\ #1$\par}
\begin{document}

\maketitle

\begin{abstract}
  Spine spaces can be considered as fragments of a projective Grassmann space.
  We prove that the structure of lines together with binary coplanarity
  relation, as well as with binary relation of being in one pencil of lines, is
  a sufficient system of primitive notions for these geometries.  It is also
  shown that, over a spine space, the geometry of pencils of lines  can be
  reconstructed in terms of the two binary relations.
\end{abstract}

\begin{flushleft}\small
  Mathematics Subject Classification (2010): 51A15, 51A45.\\
  Keywords: Grassmann space, projective space, spine space, coplanarity
\end{flushleft}

%%%%%%%%%%%%%%%%%%%%%%%%%%%%%%%%%%%%%%%%%%%%%%%%%%%%%%%%%%%%%%%%%%%%%%%%%%%%%%%%
%%%%%%%%%%%%%%%%%%%%%%%%%%%%%%%%%%%%%%%%%%%%%%%%%%%%%%%%%%%%%%%%%%%%%%%%%%%%%%%%
%
\section*{Introduction}

It was proved in \cite{bundle} that the structure of lines together with binary 
coplanarity relation $\copla$ is a sufficient system of primitive notions for projective 
and polar Grassmann spaces. Following the ideas,  motivations, and objectives of that
paper we use similar methods to prove the same for spine spaces. This is also the reason
why our terminology and notation is heavily based on \cite{bundle}. In consequence many 
definitions and concepts may sound familiar for the reader. 

A relation close in its nature to the relation $\copla$ is the relation $\copenc$ of 
being in one pencil of lines. This closeness is revealed simply by the fact that
$\mathord{\copenc}\subseteq\mathord{\copla}$ and causes that some of the reasonings
are common for $\copla$ and $\copenc$. There are differences though, mainly at the
technical level at the beginning up to the point where bundles of lines are constructed.
From that point onwards the reasoning is unified and we get that 
$\copla$ as well as $\copenc$ is a sufficient primitive notion for the geometry
a spine space  $\fixspace$.
Generally, as in the case of Grassmann spaces, the key role play maximal cliques of 
$\copla$ and $\copenc$, as well as maximal strong subspaces containing them. 
The structure of strong subspaces in spine spaces is much more complex than in 
Grassmann spaces, but pretty well known if we take a look into \cite{autspine}, 
\cite{affspine}, \cite{extparal}, and \cite{lincomp}.
The major difference is that we have to deal with three types of lines and
four types of strong subspaces as stars and tops can be projective or semiaffine 
(in particular affine) spaces.

The two relations $\copla$ and $\copenc$ are mainly used to reconstruct the
pointset of a spine space $\fixspace$, but they also appear to be sufficient
primitive notions for the geometry of pencils of lines over $\fixspace$. Both of
them are trivial on planes so, it is natural to require that all maximal strong
subspaces of $\fixspace$ are at least 3-dimensional. This is all we need to
express the geometry of pencils in terms of $\copla$ and $\copenc$.

Our approach to the reconstruction of $\fixspace$, based on bundles of its
lines,  requires a bit different assumption though, that stars or tops are at
least 4-dimensional. In some cases excluded by this assumption, which we point
out in the last section, the geometry $\fixspace$ can be recovered.  The most
interesting case is the neighbourhood of a point in the underlying Grassmann
space. It is impossible to recover this spine space from the structure of lines
equipped with neither $\copla$ nor $\copenc$, nonetheless the geometry of
its lines and pencils can be reconstructed. This shows that the geometry of lines
and pencils over $\fixspace$ is intrinsically weaker than the geometry of
$\fixspace$.

In the vein of Chow's Theorem one would want to continue the procedure of
creating  pencils over pencils. Sadly, the geometry of pencils of planes,
obtained in the first step, turns out to be disconnected which makes the whole
idea pointless.

The Appendix~\ref{sec:erratum} at the end of this paper fixes a gap
in the proof of Lemma~1.3 in \cite{bundle}.

%% Section %%%%%%%%%%%%%%%%%%%%%%%%%%%%%%%%%%%%%%%%%%%%%%%%%%%%%%%%%%%%%%%%%%%%%

\section{Generalities}

A point-line structure $\A=\struct{S, \Lines}$, where the elements of $S$ are
called \emph{points}, the elements of $\Lines$ are called \emph{lines}, and
where $\Lines\subset2^S$, is said to be a \emph{partial linear space}, or 
a \emph{point-line space}, if two
distinct lines share at most one point and every line is of size (cardinality) 
at least 2 (cf.\ \cite{cohen}).

A \emph{subspace} of $\A$ is any set $X\subseteq S$ with the property that
every line which shares with $X$ two or more points is entirely contained in $X$.
We say that a subspace $X$ of $\A$ is \emph{strong} if any two points in $X$ are
collinear.
A \emph{plane} in $\A$ is a strong subspace $E$ of $\A$ with the property that 
the restriction of $\A$ to $E$ is a projective plane. 
If $S$ is strong, then $\A$ is said to be a \emph{linear space}. 

Let us fix nonempty subset $\W\subset S$ and consider two sets
  $$\pointsout := S\setminus\W\qquad\text{and}\qquad
     \linesout := \bigl\{ k\cap\pointsout\colon k\in\Lines \text{ and } 
       \abs{k\cap \pointsout}\ge 2 \bigr\}.$$
The structure
  $$\M := \struct{\pointsout, \linesout}$$
is a \emph{fragment} of $\A$ and itself it is a partial linear space.
The incidence relation in $\M$ is again $\in$, inherited from $\A$, but
limited to the new pointset and lineset.
Following a standard convention we call the points and lines of $\M$ \emph{proper}, 
and those points and lines
of $\A$ that are not in $\pointsout$, $\linesout$ respectively are said to be 
\emph{improper}. The set $\W$ will be called the \emph{horizon of\/ $\M$}.
To every line $L\in\linesout$ we can assign uniquely the line $\cl{L}\in\Lines$,
the \emph{closure} of $L$, such that $L\subseteq\cl{L}$. 
For a subspace $X\subseteq\pointsout$ the closure of $X$ is the minimal 
subspace   $\cl{X}$ of $\A$ containing $X$. 
A line $L\in\linesout$ is said to be a \emph{projective line} if $L=\cl{L}$, 
and it is said to be an \emph{affine line} if $\abs{\cl{L}\setminus L} = 1$. 
In case $\linesout$ contains projective or affine lines only, 
then $\M$ is a \emph{semiaffine} geometry
(for details on terminology and axiom systems see \cite{kradzisz} and \cite{kradzisz2}).
In this approach an affine space is a particular case of a
semiaffine space. 
For affine lines $L_1,L_2\in\linesout$ we can define parallelism in a natural way:
\begin{cmath}
  L_1, L_2\ \text{are \emph{parallel}\enspace iff}\enspace \cl{L_1}\cap\cl{L_2}\in\W. 
\end{cmath}
We say that $E$ is a plane in $\M$ if $\cl{E}$ is a plane in $\A$.
Observe that there are two types of planes in $\M$: projective and semiaffine.
A semiaffine plane $E$ arises from $\cl{E}$ by removing a point
or a line. 
In result we get a punctured plane or an affine plane respectively.
For lines $L_1, L_2\in\linesout$ we say that they are \emph{coplanar} and write
\begin{equation}\label{eq:def:copla}
  L_1\copla L_2\quad\text{iff}\quad \text{there is a plane } E \text{ such that } L_1, L_2\subset E
\end{equation}
Let $E$ be a plane in $\M$ and $U\in\cl{E}$. A set 
\begin{equation}
  \penc(U,E):=\bset{L\in\linesout\colon U\in\cl{L}\subseteq\cl{E}}
\end{equation}
will be called a \emph{pencil of lines} if $U$ is a proper
point, or a \emph{parallel pencil} otherwise. The point $U$ is said to be the
\emph{vertex} and the plane $E$ is said to be the \emph{base plane} of that pencil.
We write
\begin{equation}\label{eq:def:copenc}
  L_1\copenc L_2\quad\text{iff}\quad \text{there is a pencil } p \text{ such that } L_1,L_2\in p.
\end{equation}
If $L_1\copenc L_2$, then clearly $L_1\copla L_2$. This means that every $\copenc$-clique is a $\copla$-clique.

For a subspace $X$ of $\M$ we write
\begin{equation}\label{eq:def:flat}
  \LinesOf(X) = \set{L\in\linesout\colon L\subset X}.
\end{equation}

If $E$ is a plane in $\M$, then the set $\LinesOf(E)$ will be called a \emph{flat}.
The set of all projective lines on $E$ augmented with a maximal set of affine lines 
on $E$ such that no two are parallel will be called a \emph{semiflat}.
As the plane $E$ can be projective, punctured or affine, we have
projective, punctured or affine semiflat (flat) respectively.
Semiflats that are not projective will be called semiaffine semiflats.

Note that projective semiflat is a projective flat. On an affine plane $E$,
where parallelism partitions the lineset into directions, a semiflat 
is a selector of $\linesout/_{\mathord{\parallel}}$
(cf.\ \cite{mafodja:dim1}).

For a subspace $X$ of $\M$, if $U\in\cl{X}$ we write
\begin{equation}\label{eq:def:semibundle}
  \LinesOf_U(X) = \set{L\in\linesout\colon U\in\cl{L} \text{ and }  L\subseteq X}.
\end{equation}

If $X$ is a strong subspace of $\M$, then $\LinesOf_U(X)$ is said to be a \emph{semibundle}.
As the vertex $U$ of a semibundle $\LinesOf_U(X)$ can be proper or improper, we call
the semibundle \emph{proper} or \emph{improper} accordingly. We omit the adjective
when we mean a semibundle in general.

\subsection{Grassmann spaces}

Let $V$ be a vector space of dimension $n$ with $3\le n<\infty$. The set of all
subspaces of $V$ will be written as $\Sub(V)$ and the set of all $k$-dimensional
subspaces (or $k$-subspaces in short) as $\Sub_k(V)$. By
 a \emph{$k$-pencil} we call the set of the form
  $$\penc(H, B) := \{ U\in\Sub_k(V)\colon H\subset U\subset B\},$$
where $H\in\Sub_{k-1}(V)$, $B\in\Sub_{k+1}(V)$, and $H\subset B$.
The family of all such $k$-pencils will be denoted by $\peki_k(V)$.
A \emph{Grassmann space} (also known as a \emph{space of pencils} or a \emph{projective Grassmannian}) is a point-line space
  $$\PencSpace(V, k) = \struct{\Sub_k(V), \peki_k(V)},$$
 with $k$-subspaces of $V$ as points and $k$-pencils as lines
(see \cite{polargras}, \cite{slitgras} for a more general definition, see also \cite{mark}).
For $0 < k < n$ it is a partial linear space. For $k=1$ and
$k=n-1$ it is a projective space. So we assume that
  $$1< k < n-1.$$

It is known that there are two classes of maximal strong subspaces in $\PencSpace(V, k)$:
\emph{stars} of the form
  $$\starof(H) = [H)_k = \{U\in\Sub_k(V)\colon H\subset U\},$$
where $H\in\Sub_{k-1}(V)$, and \emph{tops} of the form
  $$\topof(B) = (B]_k = \{U\in\Sub_k(V)\colon U\subset B\},$$
where $B\in\Sub_{k+1}(V)$.
Although non-maximal stars $[H, Y]_k$ and non-maximal tops $[Z, B]_k$, for
some $Y, Z\in\Sub(V)$, make sense but in this paper when we say `a star' or `a top'
we mean a maximal strong subspace.
It is trivial that every line, a $k$-pencil $p=\penc(H, B)$, of $\M$ can be 
uniquely extended to the star $\starof(p):=\starof(H)$ and to the top 
$\topof(p):=\topof(B)$.

\subsection{Spine spaces}\label{sec:spinespaces}

A \emph{spine space} is a fragment of a Grassmann space
chosen so that it consists of subspaces of $V$ which meet a fixed subspace in
a specified way. The concept of spine spaces was introduced in \cite{spinesp} and
developed in \cite{affspine}, \cite{autspine}, \cite{lincomp}, \cite{extparal}.

Let $W$ be a fixed subspace of $V$ and let $m$ be an integer with 
\begin{equation}\label{eq:spineparam}
  k-\codim(W)\le m\le k, \dim(W).
\end{equation}  
From the points of the Grassmann space $\SpPencils(k, V)$ we take those which as subspaces of $V$
meet $W$ in dimension $m$, that is:
  $$\SpineSpPoints(k,m,W) := \set{U\in\Sub_k(V)\colon \dim(U\cap W) = m}.$$
As new lines we take those lines of $\SpPencils(k, V)$ which have at least two new points:
  $$\SpineSpLines(k,m,W) := \set{L\cap\SpineSpPoints(k,m,W)\colon L\in\SpPencilsLines(k, V) \text{ and }
      \abs{L\cap\SpineSpPoints(k,m,W)} \ge 2 }.$$
\noindent
The point-line structure:
  $$\M = \SpineSp(k,m,V,W) := \bstruct{\SpineSpPoints(k,m,W),\/ \SpineSpLines(k,m,W)}$$
will be called a \emph{spine space}. This is a Gamma space. Specifically, depending on
$k, m$ and $\dim(W)$ it can be: a projective space, a slit space (cf.\ \cite{KM67}, \cite{KP70}),
an affine space or the  \emph{space of linear complements} (cf.\ \cite{blunck-havlicek}, \cite{lincomp}).
As $\M$ is a fragment of the Grassmann space $\SpPencils(k, V)$ 
we can distinguish a set $\W$ of improper points in $\M$, i.e.\ a horizon. 
Consequently, a line of $\M$ is either affine or projective.

The class of affine lines is denoted by $\aflines$. Projective lines fall into two disjoint 
classes $\Lines^\alpha$ and $\Lines^\omega$. 
For brevity $\Lines := \aflines\cup\Lines^\alpha\cup\Lines^\omega$.
Details can be found in Table~\ref{tab:lines}. 

\begin{table}[H]
  \small%
  \begin{center}
    \begin{tabular}{@{}lcc@{}}\toprule%
      \multicolumn{1}{c}{class} & representative line $g=\Pencil(H, B)\cap\SpineSpPoints(k,m,W)$  & $g^\infty$ \\
      \midrule
      $\AfLines(k,m,W)$ & $H\in\SpineSpPoints(k-1,m,W),\ B\in\SpineSpPoints(k+1,m+1,W)$       & $H+(B\cap W)$ \\
      $\aLines(k,m,W)$  & $H\in\SpineSpPoints(k-1,m,W),\ B\in\SpineSpPoints(k+1,m,W)$         & -- \\
      $\oLines(k,m,W)$  & $H\in\SpineSpPoints(k-1,m-1,W),\ B\in\SpineSpPoints(k+1,m+1,W)$     & -- \\
      \bottomrule
    \end{tabular}
    \caption{The classification of lines in a spine space \SpineSp(k,m,V,W).}
    \label{tab:lines}    
  \end{center}  
\end{table}

The geometry of a spine space is complex in that there is an overwhelming variety
of types  of subspaces. As usual most important are strong subspaces. 
A star from $\SpPencils(k, V)$ restricted to $\M$ either contains affine lines or not. 
In the first case it is called an $\alpha$-star which is a semiaffine space, in the other 
case it is called an $\omega$-star which is a projective space. A top from $\SpPencils(k, V)$ 
restricted to $\M$ also contains affine lines or not and is called an $\omega$-top 
or an $\alpha$-top respectively. 
On the other hand, each strong subspace $X$ of a spine space is a slit space,
that is a projective space $\mathbf P$ with a subspace $\cal D$ removed. The
form of a maximal strong subspace from each class and the dimension of the
corresponding spaces $\mathbf P$ and $\cal D$ are presented in 
Table~\ref{tab:subs}. In the extremes $\cal D$ can be void, then $X$ is
basically  a projective space, or a hyperplane, then $X$ is an affine space.
One should be aware that for specific values of $\dim(V)$, $\dim(W)$, $k$, and $m$
some classes of maximal strong subspaces are void.

\begin{table}[H]
  \begin{center}
    \noindent\small%
    \begin{tabular}{@{}lccc@{}}\toprule
      \multicolumn{1}{c}{class} & representative subspace
        & $\dim({\mathbf P})$  & $\dim({\cal D})$ \\
      \midrule
      $\omega$-stars   & $\seg{H}{H+W}_k\colon H\in\SpineSpPoints(k-1,m-1,W)$
        & $\dim(W)-m$         & -1 \\
      $\alpha$-stars  & $\seg{H}{V}_k\cap\SpineSpPoints(k,m,W)\colon H\in\SpineSpPoints(k-1,m,W)$
        & $\dim(V)-k$         &  $\dim(W)-m-1$ \\
      $\alpha$-tops   & $\seg{B\cap W}{B}_k\colon B\in\SpineSpPoints(k+1,m,W)$
        & $k-m$              & -1 \\
      $\omega$-tops  & $\seg{\Theta}{B}_k\cap\SpineSpPoints(k,m,W)\colon B\in\SpineSpPoints(k+1,m+1,W)$
        & $k$                & $k-m-1$ \\
      \bottomrule
    \end{tabular}
    \caption{The classification of stars and tops in a spine space \SpineSp(k,m,V,W).}
    \label{tab:subs}    
  \end{center}  
\end{table}

A line of $\M$ can be in at
most two maximal strong subspaces of different type: a star and a top.

\begin{fact}\label{fact:spineintersections}
  A projective star and a projective top are either disjoint or they share a point. 
  In remaining cases, a star and a top are either disjoint or they share a line.
  Two stars (or two tops) are either disjoint or they share a point.
\end{fact}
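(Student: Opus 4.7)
The plan is to lift the question to the ambient Grassmann space $\PencSpace(V,k)$, where the intersection of maximal strong subspaces is straightforward to describe, and then to cut with $\SpineSpPoints(k,m,W)$. In $\PencSpace(V,k)$, two distinct stars $\starof(H_1)$ and $\starof(H_2)$ either are disjoint or meet in the single $k$-subspace $H_1+H_2$ (when $\dim(H_1+H_2)=k$); dually for tops (with $B_1\cap B_2$ replacing $H_1+H_2$); and a star $\starof(H)$ and a top $\topof(B)$ are disjoint when $H\not\subseteq B$, while otherwise they meet exactly in the pencil $\penc(H,B)$. Intersecting with $\SpineSpPoints(k,m,W)$ can only shrink a one-point intersection to ``a point or nothing'', which already settles the two-star and two-top clauses.

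For a star $\starof(H)$ and a top $\topof(B)$ with $H\subseteq B$, the question reduces to counting the $U\in\penc(H,B)$ for which $\dim(U\cap W)=m$. Every such $U$ has the form $U=H+\langle v\rangle$ with $v\in B\setminus H$, and the inclusions $H\cap W\subseteq U\cap W\subseteq B\cap W$ combined with the elementary identity $(H+W)\cap B=H+(B\cap W)$ show that $U\cap W\supsetneq H\cap W$ precisely when the coset $v+H$ lies in the subspace $\bigl(H+(B\cap W)\bigr)/H$ of the two-dimensional quotient $B/H$. The types of the star and top in Table~\ref{tab:subs} then fix $\dim(H\cap W)$ and $\dim(B\cap W)$ and pin down the behaviour entirely.

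The next step is to handle the four combinations in turn. For an $\omega$-star and an $\alpha$-top (the projective--projective case) one has $\dim(H\cap W)=m-1$ and $\dim(B\cap W)=m$, so $U\cap W$ takes dimension $m-1$ or $m$, and $m$ is attained on the unique $U=H+(B\cap W)$; the intersection is a single point. For an $\omega$-star and an $\omega$-top, a dimension count yields $H+(B\cap W)=B$, hence every $U\in\penc(H,B)$ lies in $\SpineSpPoints(k,m,W)$ and the intersection is a full $\omega$-line. For an $\alpha$-star and an $\alpha$-top the sandwich already forces $\dim(U\cap W)=m$ throughout, yielding the entire $\alpha$-line. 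Finally, for an $\alpha$-star and an $\omega$-top exactly one $U$ has $\dim(U\cap W)=m+1$, so removing it from $\penc(H,B)$ leaves an affine line; matching these outcomes against Table~\ref{tab:lines} closes the argument.

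The main technical hurdle is the bookkeeping of $\dim(U\cap W)$ across the four mixed combinations, in particular controlling when the ``bad'' directions in $B/H$ form a proper 1-dimensional subspace rather than the whole quotient. Once this accounting is organised, all three assertions follow by direct inspection of the Grassmann picture.
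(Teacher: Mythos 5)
Your proof is correct, but note that the paper itself gives no argument for this Fact at all: it is stated as known, being the computation that underlies the classification of lines and of stars and tops in Tables~\ref{tab:lines} and~\ref{tab:subs} (and the cited spine-space literature). Your route -- lift to the ambient Grassmann space $\PencSpace(V, k)$, where two distinct stars (or tops) meet in at most a point and a star and a top meet either emptily or exactly in the pencil $\penc(H,B)$, and then compute the trace on $\SpineSpPoints(k,m,W)$ case by case -- is precisely that computation, and your bookkeeping of $\dim(U\cap W)$ is accurate in all four combinations: a single point $H+(B\cap W)$ for an $\omega$-star and an $\alpha$-top, the whole pencil for $\omega$-star/$\omega$-top and for $\alpha$-star/$\alpha$-top, and the pencil minus the single point $H+(B\cap W)$ for an $\alpha$-star and an $\omega$-top, which matches $g^\infty$ in Table~\ref{tab:lines}. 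Two steps deserve to be made explicit: first, a star (top) of the spine space is exactly the trace of a unique Grassmann star (top), so distinct spine stars have distinct closures -- this is what the two-star/two-top clause really rests on; second, in the $\omega$-star/$\omega$-top case one should say why $\dim(U\cap W)$ cannot reach $m+1$, namely $\dim(U\cap W)\le\dim(H\cap W)+1=m$ because $U/H$ is one-dimensional -- the same observation that justifies your coset criterion in $B/H$. With these remarks your argument is a complete and self-contained verification of a statement the paper leaves unproved.
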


\begin{lem}\label{lem:spineplanes}
  Three pairwise coplanar and concurrent, or parallel, lines not all on a plane span a star or a top.
\end{lem}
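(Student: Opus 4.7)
The plan is first to observe that in either case of the hypothesis the three lines share a common point (possibly improper) of the ambient Grassmann space, and then to use the dichotomy of planes of $\PencSpace(V,k)$ into those contained in a star and those contained in a top to force $L_1,L_2,L_3$ into a common maximal strong subspace. In the parallel case, each affine line's closure carries exactly one improper point, so pairwise parallelism forces all three closures to meet in the same point $U\in\W$; in the concurrent case $U$ is simply the common proper vertex. Either way we obtain a point $U$ with $U\in\cl{L_i}$ for $i=1,2,3$, and pairwise coplanarity yields, for each pair, a plane $E_{ij}$ whose Grassmann closure $\cl{E_{ij}}$ is a plane of $\PencSpace(V,k)$ through $U$ containing $\cl{L_i}$ and $\cl{L_j}$.

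Next I would invoke the standard fact that every plane of $\PencSpace(V,k)$ lies in a unique star or in a unique top of the Grassmann space, these possibilities being exclusive because $\starof(\cl{L})\cap\topof(\cl{L})=\cl{L}$ for each line $\cl{L}$ (the right-hand side is just the pencil $\penc(H,B)$ when $\cl{L}=\penc(H,B)$). Assign to each $\cl{E_{ij}}$ the corresponding type, \emph{star} or \emph{top}. If these types were not all equal---say $\cl{E_{12}}$ is of type \emph{star} and $\cl{E_{13}}$ of type \emph{top}---then $\cl{L_2}\subseteq\starof(\cl{L_1})$ and $\cl{L_3}\subseteq\topof(\cl{L_1})$, and either choice of type for $\cl{E_{23}}$ forces one of $\cl{L_2},\cl{L_3}$ into $\starof(\cl{L_1})\cap\topof(\cl{L_1})=\cl{L_1}$, making two of the $L_i$ coincide and so making all three coplanar, contrary to the hypothesis.

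Hence all three planes $\cl{E_{ij}}$ are of the same type, and $\cl{L_1},\cl{L_2},\cl{L_3}$ all lie in a common Grassmann star (or, dually, a common top). Restricting this maximal subspace to $\M$ then gives, according to Table~\ref{tab:subs}, an $\alpha$- or $\omega$-star (respectively, an $\alpha$- or $\omega$-top) of the spine space that contains $L_1,L_2,L_3$, so they span a star or a top as required. The main obstacle is the type-uniformity argument in the middle paragraph; the rest is bookkeeping around closures and around the fact that an affine line has a single improper point, which is what collapses the parallel case into the concurrent one.
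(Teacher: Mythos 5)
Your argument is correct, and it reaches the conclusion by a route different from the paper's. The paper works entirely inside the spine space: each of the three planes determined by the pairs of lines lies in a unique maximal strong subspace of $\M$, and Fact~\ref{fact:spineintersections} rules out two of these planes having different types (two stars, or two tops, of $\M$ would otherwise share a line, or a star and a top would share a plane), whence all three lines sit in one star or one top. You instead lift everything to the ambient Grassmann space via closures and use the facts that a line $\cl{L}=\penc(H,B)$ has the unique extensions $\starof(H)$ and $\topof(B)$ with $\starof(H)\cap\topof(B)=\cl{L}$, and that a plane of $\PencSpace(V,k)$ lies in exactly one maximal strong subspace; your type-uniformity step is sound, though you should make explicit the intermediate identifications it relies on (e.g.\ that the star containing $\cl{E_{23}}$ is $\starof(\cl{L_2})$ and that $\cl{L_2}\subseteq\starof(\cl{L_1})$ forces $\starof(\cl{L_2})=\starof(\cl{L_1})$), all of which follow from the uniqueness of extensions the paper calls trivial. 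Two further small points: your treatment of the parallel case via the single improper point of an affine line is fine but not actually needed for the type argument, and your final descent step deserves one more sentence, namely that a Grassmann star or top whose restriction to $\pointsout$ contains a line of $\M$ is exactly one of the classes of Table~\ref{tab:subs}, hence a maximal strong subspace of $\M$. What your approach buys is independence from Fact~\ref{fact:spineintersections} (it uses only standard Grassmannian facts, so it transfers verbatim to other fragments of $\PencSpace(V,k)$); what the paper's buys is brevity, since the spine-space intersection dichotomy does all the work in two sentences.
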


\begin{proof}
  There are three planes and it suffices to note by
  \ref{fact:spineintersections} that no two of them can be of distinct type,
  i.e.\ they all are of type star or top. Consequently, all these lines lie
  in one maximal strong subspace.
\end{proof}

%% Section %%%%%%%%%%%%%%%%%%%%%%%%%%%%%%%%%%%%%%%%%%%%%%%%%%%%%%%%%%%%%%%%%%%%%

\section{Maximal cliques}

Let $\M$ be a spine space. 
The goal now is to show that the set of lines equipped with
either coplanarity relation $\copla$ or relation $\copenc$ of being in one pencil,
is a sufficient system of primitive notions for $\M$.
The key tool to achieve that are maximal cliques.

\begin{lem}\label{lem:maxcliques}
  \begin{sentences}
  \item
    Flats and semibundles are $\copla$-cliques.
  \item
    Semiflats and proper semibundles are $\copenc$-cliques.
  \end{sentences}
\end{lem}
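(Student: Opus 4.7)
The plan is to verify each of the four assertions by exhibiting, for an arbitrary pair of lines in the claimed clique, a concrete plane (for part (a)) or a concrete pencil (for part (b)) that contains both of them; then the conclusion is immediate from \eqref{eq:def:copla} or \eqref{eq:def:copenc}.

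For part (a), the flat case is a direct unwinding of definitions: two lines $L_1, L_2\in\LinesOf(E)$ both sit inside the plane $E$, so $L_1\copla L_2$. For a semibundle $\LinesOf_U(X)$, I would lift the picture to the ambient Grassmann space: the closures $\cl{L_1}$ and $\cl{L_2}$ are distinct projective lines of the strong subspace $\cl{X}$ sharing the point $U$. Since $\cl{X}$ is a slit (in particular, projective) space, two intersecting projective lines in it span a projective plane $\cl{E}\subseteq\cl{X}$, and then $E:=\cl{E}\cap\pointsout$ is a plane of $\M$ containing both $L_1$ and $L_2$. The argument is uniform in $U$: if $U$ is improper then both lines are forced to be affine (their closures pass through a point of $\W$), but the slit-space spanning argument is unaffected.

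For part (b), the proper semibundle case is now immediate from the plane $E$ built above: with $U$ proper, $\penc(U,E)$ is a pencil of lines in the sense of \eqref{eq:def:copenc} containing both $L_1,L_2$, so $L_1\copenc L_2$. For semiflats I would split by the type of the base plane $E$. If $E$ is projective, every line on $\cl{E}$ is projective and any two meet at a proper point $U\in E$, giving $\penc(U,E)$. If $E$ is punctured with $\cl{E}\setminus E=\{U_\infty\}$, then affine lines on $\cl{E}$ are precisely the lines through $U_\infty$ and are pairwise parallel, so a semiflat consists of all projective lines on $\cl{E}$ plus at most one affine line; any two projective lines meet at a point different from $U_\infty$, and the affine line meets each projective line at a point different from $U_\infty$ (since projective lines avoid $U_\infty$), so the meeting point is always proper. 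Finally, if $E$ is affine, then every line on $E$ is affine, parallelism partitions them into directions, and the selector condition built into the definition of a semiflat forces any two chosen lines to be non-parallel, hence to meet in a proper point. In every subcase the two lines share a proper vertex and a common base plane, hence a pencil.

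The one step that needs genuine input beyond definition chasing is the plane construction inside $\cl{X}$ in the semibundle case, particularly when the vertex $U$ is improper. The substantive ingredient there is the structural classification of maximal strong subspaces of $\M$ (Table~\ref{tab:subs}) which ensures that $\cl{X}$ is a projective (slit) space in which two intersecting projective lines always span a plane. Once this is granted, the rest of the proof is a case-by-case bookkeeping of plane types and parallelism, and no further geometric input is needed.
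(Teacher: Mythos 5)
Your proposal is correct and follows essentially the same route as the paper's own proof: flats are immediate, semibundles are handled via the (closure of the) strong subspace being a projective/slit space in which two lines through the common vertex span a plane, and semiflats are treated by the same case split (projective, punctured, affine) showing any two lines meet in a proper point of the base plane. The only slip is terminological -- it is $X$ that is a slit space while its closure $\cl{X}$ is projective -- but the spanning argument you use is exactly the intended one, so nothing is missing.
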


\begin{proof}
  (i): 
  It is clear that flats are $\copla$-cliques. Let $X$ be a strong subspace of
  $\M$ and $U\in\cl{X}$. Note that $X$ is, up to an isomorphism, a slit
  space. Therefore, any two lines in the semibundle $\LinesOf_U(X)$  are coplanar.

  (ii): 
  Let $K$ be a semiflat. If $K$ is a projective semiflat, then every two lines
  of $K$ are concurrent. In case $K$ is a punctured semiflat, there is a single
  affine line in $K$ which intersects all the other lines in $K$.
  If $K$ is an affine semiflat, then all the lines in $K$ are affine and
  any two of them are concurrent. This justifies that $K$ is a $\copenc$-clique. 
  The fact that that proper semibundles are $\copenc$-cliques is evident.
\end{proof}

\begin{prop}\label{lem:max-copla-cliques}
  Every maximal $\copla$-clique is either a flat or a semibundle.
\end{prop}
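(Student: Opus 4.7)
The plan is to split on whether all lines of $K$ lie in a single plane. If they do, say in a plane $E$, then $K \subseteq \LinesOf(E)$, and since $\LinesOf(E)$ is a $\copla$-clique by Lemma~\ref{lem:maxcliques}(i), maximality of $K$ forces $K = \LinesOf(E)$ -- a flat. Otherwise I would pick three lines $L_1, L_2, L_3 \in K$ not on any common plane, and work to show that $K$ is a semibundle.

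First I would extract a common vertex of $L_1, L_2, L_3$. Each pair $L_i, L_j$ spans a unique plane $E_{ij}$ in $\M$, and inside $E_{ij}$ the lines $L_i, L_j$ either intersect or are parallel; this produces a unique point $U_{ij} \in \cl{L_i} \cap \cl{L_j}$, proper or improper. If the three points $U_{12}, U_{13}, U_{23}$ were pairwise distinct, they would form a triangle in $\A$ whose sides are $\cl{L_1}, \cl{L_2}, \cl{L_3}$, placing the three lines on a common plane -- a contradiction; collinearity of the $U_{ij}$ would force $\cl{L_1} = \cl{L_2} = \cl{L_3}$ hence $L_1 = L_2 = L_3$, and any partial coincidence is ruled out by $|\cl{L_i} \cap \cl{L_j}| = 1$. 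So $U := U_{12} = U_{13} = U_{23}$ is a common point, and $L_1, L_2, L_3$ are pairwise concurrent at $U$ (when $U$ is proper) or pairwise parallel (when $U$ is improper). Lemma~\ref{lem:spineplanes} then delivers a maximal strong subspace $X$ -- a star or a top -- with $L_1, L_2, L_3 \subseteq X$ and $U \in \cl{X}$.

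Next I would show that every $L \in K$ lies in $X$. If $L \subseteq E_{ij}$ for some pair, then $L \subseteq X$ immediately. Otherwise the triple $L, L_i, L_j$ is pairwise coplanar but not on a common plane, so the first step applied to it forces its common point to be in $\cl{L_i} \cap \cl{L_j} = \{U\}$; in particular $U \in \cl{L}$. For each $i$, the plane spanned by $L$ and $L_i$ sits in a unique maximal strong subspace $Y_i$ of $\M$. If $Y_i = X$ for some $i$ we are done. Otherwise each intersection $X \cap Y_i$ contains the whole line $\cl{L_i}$, so by Fact~\ref{fact:spineintersections} every $Y_i$ must be of opposite type to $X$. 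Since a line of $\M$ lies in at most one star and at most one top, $Y_1 = Y_2 = Y_3 =: Y$; but then $X \cap Y$ would contain the three distinct lines $\cl{L_1}, \cl{L_2}, \cl{L_3}$, contradicting that $X \cap Y$ is at most a line. Hence $L \subseteq X$ in every case, so $K \subseteq \LinesOf_U(X)$, which is a $\copla$-clique by Lemma~\ref{lem:maxcliques}(i); maximality of $K$ forces equality, exhibiting $K$ as a semibundle.

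The main obstacle is this last step, where the interplay of stars and tops in spine spaces -- including the projective/semiaffine dichotomy -- has to be controlled via Fact~\ref{fact:spineintersections} together with the basic structural fact that a line lies in at most one star and at most one top. The common-vertex argument is a standard triangle-collapse, but must be carried out with care because improper points enter the picture whenever the lines are parallel rather than concurrent.
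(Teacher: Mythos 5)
Your argument breaks at the very last inference. From the two cases you conclude ``$L\subseteq X$ in every case, so $K\subseteq\LinesOf_U(X)$'', but membership in the semibundle $\LinesOf_U(X)$ requires \emph{both} $L\subseteq X$ and $U\in\cl{L}$ (see \eqref{eq:def:semibundle}), and you established $U\in\cl{L}$ only in the second case, when $L$ lies in none of the planes $E_{ij}$. For a line $L\in K$ with $L\subseteq E_{12}$, say, you only obtain $L\subseteq X$; what you have then proved is merely $K\subseteq\LinesOf(X)$, which is not a $\copla$-clique once $X$ has dimension at least $3$, so maximality of $K$ gives nothing and $K$ has not been exhibited as a semibundle. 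The missing claim is true and can be supplied by the same intersection argument you already use: if $L\subseteq E_{12}$ and $L\neq L_1,L_2$, then $L\copla L_3$, so $\cl{L}$ and $\cl{L_3}$ meet (the closure of their common plane is a projective plane); since $L_3\not\subseteq E_{12}$, the line $\cl{L_3}$ meets the plane $\cl{E_{12}}$ exactly in $U$, hence $\cl{L}\cap\cl{L_3}\subseteq\cl{E_{12}}\cap\cl{L_3}=\{U\}$ and $U\in\cl{L}$. With that sentence added the proof closes correctly.

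Apart from this gap the proposal is sound, and it takes a genuinely different route from the paper: the paper shows that $\bigcup K$ is a collinearity clique, extends it to a maximal strong subspace of the ambient Grassmann space and restricts back to $\M$, whereas you stay inside $\M$ and pin down $X$ through Lemma~\ref{lem:spineplanes}, Fact~\ref{fact:spineintersections} and the fact that a line lies in at most one star and at most one top; your analysis even supplies details (the common vertex, and $U\in\cl{L}$ for lines outside the $E_{ij}$) that the paper only asserts. Two smaller points: the intersections $X\cap Y_i$ and $X\cap Y$ contain the proper lines $L_i$, not their closures $\cl{L_i}$, since maximal strong subspaces of $\M$ consist of proper points only; and the triangle-collapse step silently uses the Grassmann-specific fact that three pairwise intersecting lines with three distinct intersection points lie on a common plane (they sit in a common star or top), which deserves a word of justification, although it is in the same spirit as the trichotomy used in Lemma~\ref{lem:TRG}(i).
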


\begin{proof}
  Let $K$ be a maximal $\copla$-clique which is not a flat.
  So, there are three pairwise distinct lines in $K$ not all on a plane. They all
  meet in a point $U$, possibly improper. Moreover, for any $L\in K$ we have $U\in\cl{L}$.

  Let $U_1, U_2$ be distinct points in $\bigcup K$. There are lines $M_1, M_2\in K$
  such that $U_1\in M_1$, $U_2\in M_2$. As $M_1\copla M_2$ the points $U_1, U_2$
  are collinear. This means that $\bigcup K$ is a collinearity clique.
  
  Now, let $\cl{\M}$ be the Grassmann space embracing $\M$,
  and let $\cl{X}$ be a maximal collinearity clique in $\cl{\M}$ containing 
  $\bigcup K$. Set $X := \cl{X}\cap\SpineSpPoints(k,m,W)$. It is clear that $\bigcup K\subseteq X$.
  As  $\cl{\M}$ is a Gamma space $\cl{X}$ is a maximal strong subspace of 
  $\cl{\M}$. Therefore, $X$ is a maximal strong subspace of $\M$. 
  Note that $U\in\cl{X}$. 
  Take a point $W\in X$ distinct from $U$. There is a, possibly affine, line $M=\LineOn(U, W)$ 
  contained in $\cl{X}$. Since $\cl{X}$ is, up to an isomorphism, a projective space, 
  all the lines through $U$ in $\cl{X}$ are pairwise coplanar, so $M\in K$. Hence
  $W\in\bigcup K$ and consequently $\bigcup K = X$. We have actually shown that
  $K=\LinesOf_U(X)$ which means that $K$ is a semibundle.
\end{proof}

\begin{prop}\label{lem:max-copenc-cliques}
  Every maximal $\copenc$-clique is either a semiflat or a proper semibundle.
\end{prop}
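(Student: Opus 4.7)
The plan is to follow the template of Proposition~\ref{lem:max-copla-cliques}, exploiting the containment $\mathord{\copenc}\subseteq\mathord{\copla}$. Let $K$ be a maximal $\copenc$-clique. Any line $L$ lies in a proper pencil (pick any proper point $V\in L$ and a second line through $V$ coplanar with $L$), so $|K|\ge 2$. Since $K$ is also a $\copla$-clique, by Zorn's lemma it is contained in some maximal $\copla$-clique $K^{\#}$; by Proposition~\ref{lem:max-copla-cliques}, $K^{\#}$ is either a flat $\LinesOf(E)$ or a semibundle $\LinesOf_U(X)$.

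Suppose $K^{\#}=\LinesOf_U(X)$. If $U$ is improper, every line of $\LinesOf_U(X)$ is affine with $U$ as its unique improper point; inside the projective closure $\cl{X}$ any two such distinct lines meet only at $U$, so they are parallel and not $\copenc$-related. This forces $|K|\le 1$, a contradiction. Hence $U$ must be proper, and then Lemma~\ref{lem:maxcliques}(ii) gives that $\LinesOf_U(X)$ itself is a $\copenc$-clique, so $K=\LinesOf_U(X)$ is a proper semibundle by maximality of $K$.

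Suppose instead $K^{\#}=\LinesOf(E)$ for a plane $E$. Then $K\subseteq\LinesOf(E)$ and, because $\copenc$ excludes parallel pairs, $K$ contains no two parallel affine lines. Any line of $\LinesOf(E)$ that is not parallel to every line of $K$ could be adjoined without destroying the $\copenc$-property, so maximality of $K$ forces $K$ to be a semiflat on $E$ (projective, punctured, or affine, according to the type of $E$). It remains to rule out lines $L'\not\subseteq\cl{E}$ that are $\copenc$-related to every line of $K$. Such an $L'$ would meet every $L\in K$ at a proper point $P_L\in\cl{E}$; if two such points $P_{L_1},P_{L_2}$ were distinct, then $\cl{L'}$ would contain two points of $\cl{E}$, giving $L'\subseteq\cl{L'}\subseteq\cl{E}$, a contradiction. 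Hence all the $P_L$ coincide with a single vertex $V\in\cl{E}$, so every line of $K$ passes through $V$. But a semiflat always contains a triangle of three lines with three distinct pairwise intersection points, so it cannot be a single pencil, which closes the flat case.

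The main obstacle is the final maximality check in the flat case: one has to see that each type of semiflat contains a non-concurrent triangle and so is never a single pencil. This is a quick inspection across the three plane types (projective, punctured, affine), but it is the only place where the internal structure of a plane has to be examined directly rather than handled by the preceding $\copla$-clique classification.
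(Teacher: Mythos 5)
Your overall route coincides with the paper's: embed the maximal $\copenc$-clique $K$ into a maximal $\copla$-clique, invoke \ref{lem:max-copla-cliques}, and split into the semibundle and flat cases. Your exclusion of an improper vertex and your argument that, in the flat case, maximality of $K$ forces it to consist of all projective lines of $E$ together with a maximal pairwise non-parallel family of affine lines --- i.e.\ a semiflat --- are correct, and in fact more explicit than the paper's own three-sentence proof. The only unstated point is the claim that every line lies in a proper pencil (needed for $|K|\ge 2$); this does hold, since by Table~\ref{tab:subs} every line of a spine space extends to a star or a top of projective dimension at least $2$ (as $n-k\ge 2$ and $k\ge 2$), hence lies on a plane of $\fixspace$.

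Your closing paragraph, however, is both unnecessary and incorrect. Unnecessary: if a line $L'\not\subseteq\cl{E}$ were $\copenc$-related to every line of $K$, then $K\cup\{L'\}$ would be a $\copenc$-clique properly containing $K$, contradicting the maximality of $K$ immediately --- no inspection of the plane is required, so there is no ``final maximality check'' in the flat case. Incorrect: the claim you single out as the main obstacle, that every semiflat contains a triangle of three non-concurrent lines and hence can never be a single pencil, fails for affine semiflats. On an affine plane the lines through a fixed proper point form exactly one line from each parallel class, hence a maximal set of pairwise non-parallel affine lines, i.e.\ an affine semiflat, and this semiflat is a pencil. Such a configuration genuinely occurs as a maximal $\copenc$-clique, for instance when $\alpha$-stars are affine planes ($n-k=2$ and $\dim(W)-m=2$), a case not excluded at this point of the paper since \eqref{eq:3-spaces-num} is assumed only later. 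Because the faulty step is superfluous, your proof is repaired simply by deleting that paragraph; as written, though, it rests on a false statement that you present as essential.
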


\begin{proof}
  Let $K$ be a maximal $\copenc$-clique which is not a semiflat.
  Every $\copenc$-clique is a $\copla$-clique, so $K$ is a $\copla$-clique though not
  necessarily maximal. Let $K'$ be a maximal $\copla$-clique containing $K$.
  By \ref{lem:max-copla-cliques} $K'=\LinesOf_U(X)$ for some maximal strong subspace 
  $X$ and a proper point $U$. As $X$ is a projective space and $K\subseteq K'$ we 
  get $K=K'$ and the claim follows. 
\end{proof}

\begin{lem}\label{rem:semiflat}
  A maximal $\copenc$-clique $K$ satisfies the following condition:
  \begin{multline}\label{eq:podmianka}
    \text{there are lines } L_1\in K, L_2\in\Lines\setminus K
    \text{ such that } \\
    (K\setminus\set{L_1})\cup\set{L_2} \text{ is a maximal $\copenc$-clique}
  \end{multline}
  iff $K$ is a semiaffine semiflat.
\end{lem}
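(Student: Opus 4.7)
I will prove the two implications separately, with the forward direction by explicit construction and the backward direction by eliminating the two alternative types of maximal $\copenc$-cliques identified in Proposition~\ref{lem:max-copenc-cliques}.

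For $(\Leftarrow)$, suppose $K$ is a semiaffine semiflat on a plane $E$. If $E$ is an affine plane, then $K$ is a selector of parallelism classes on $E$; picking any $L_1\in K$ (of direction $d$, say) and any other line $L_2$ on $E$ in the same direction $d$ gives $L_2\notin K$, and $(K\setminus\{L_1\})\cup\{L_2\}$ is again a selector, hence another affine semiflat, hence another maximal $\copenc$-clique by Lemma~\ref{lem:maxcliques}. If $E$ is a punctured plane, $K$ consists of all projective lines on $E$ together with exactly one affine line $L_0$ (since all affine lines on a punctured plane share the unique improper point of $\cl{E}$, they are pairwise parallel, and only one may appear in a semiflat); the same swap works by exchanging $L_0$ for any other affine line on $E$.

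For $(\Rightarrow)$, assume \eqref{eq:podmianka} holds via $L_1,L_2$ and set $K' := (K\setminus\{L_1\})\cup\{L_2\}$. By Proposition~\ref{lem:max-copenc-cliques}, each of $K$ and $K'$ is either a semiflat or a proper semibundle. The task is to rule out the two cases in which $K$ is not a semiaffine semiflat.

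If $K=\LinesOf(E)$ is a projective semiflat on a projective plane $E$, then $\bigcup(K\setminus\{L_1\}) = E$ (every point of $E$ lies on at least two lines of $E$), so any semiflat alternative $K' = \LinesOf(E')$ or its semiaffine variant forces $\cl{E'}\supseteq E$, hence $E=E'$, hence $L_2\in \LinesOf(E)=K$, a contradiction; and any proper-semibundle alternative $K'=\LinesOf_V(Y)$ would force every line of $K\setminus\{L_1\}$ through the point $V$, impossible since $E$ contains far more lines than any single pencil. If instead $K=\LinesOf_U(X)$ is a proper semibundle with $X$ a maximal strong subspace, then under the paper's standing dimension assumption ($\dim X\ge 3$) the lines through $U$ in $X$ form a family strictly larger than any pencil, so $K\setminus\{L_1\}$ cannot fit into a single plane $E'$ and $K'$ cannot be a semiflat. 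If $K'=\LinesOf_V(Y)$ is another proper semibundle, then the elements of $K\cap K' = K\setminus\{L_1\}$ are lines passing through both $U$ and $V$ and lying in $X\cap Y$; Fact~\ref{fact:spineintersections} shows that unless $(U,X)=(V,Y)$ this set has cardinality at most $1$ (a line containing two distinct points is unique, and in the star-top case $X\cap Y$ is at most a line which itself contributes at most one line through $U$), contradicting $|K\setminus\{L_1\}|\ge 2$; while $(U,X)=(V,Y)$ gives $K=K'$, again contradicting $L_2\notin K$.

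The main obstacle is the proper-semibundle case: the uniform bound $|K\cap K'|\le 1$ for distinct pointed strong subspaces is the critical step, and it rests squarely on the intersection dichotomy of Fact~\ref{fact:spineintersections} together with the elementary observation that two points determine at most one line.
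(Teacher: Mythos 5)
Your proof is correct and follows essentially the same route as the paper: the backward direction is the same parallel-line swap on a semiaffine plane (the paper does both the punctured and affine cases at once), and the forward direction uses the classification of Proposition~\ref{lem:max-copenc-cliques} to show that for a projective flat or a proper semibundle the only line completing $K\setminus\{L_1\}$ to a maximal $\copenc$-clique is $L_1$ itself, which is exactly the uniqueness claim the paper asserts and which you spell out via Fact~\ref{fact:spineintersections}. One caveat: the ``standing dimension assumption $\dim X\ge 3$'' you invoke is not actually in force at this point (condition \eqref{eq:3-spaces-num} is introduced only after this lemma, and even then it does not bound the dimension of $\omega$-stars); this is harmless, however, because a semibundle $\LinesOf_U(X)$ with $X$ a plane is either not a maximal $\copenc$-clique at all (when $X$ is a projective or punctured plane) or coincides with an affine semiflat (when $X$ is an affine plane), so the semibundles you need to exclude automatically satisfy $\dim X\ge 3$ --- that observation should replace the appeal to the later assumption.
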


\begin{proof}
\ltor 
Let $K$ be a maximal $\copenc$-clique that satisfies \eqref{eq:podmianka} and
is not a semiaffine semiflat. By \ref{lem:max-copenc-cliques}, $K$ is either a 
projective flat or a proper semibundle. In both cases the unique line that complements
the set $K\setminus\set{L_1}$ to a maximal $\copenc$-clique is $L_1$. This
contradicts \eqref{eq:podmianka}.

\rtol
Let $E$ be a semiaffine plane and $K$ be a semiflat on $E$.
Take two affine lines: $L_1\in K$ and $L_2\subseteq E$ such that 
$L_2\parallel L_1$, $L_2\neq L_1$. Note that $L_2\notin K$. Then the set
of affine lines in $K$ with $L_1$ replaced by $L_2$ is a
maximal set of affine lines in $E$ such that no two are parallel. Consequently,
$(K\setminus\set{L_1})\cup\set{L_2}$ is a maximal $\copenc$-clique.
\end{proof}

The criterion \eqref{eq:podmianka} from \ref{rem:semiflat} could be used to distinguish
semiaffine semiflats in the family of all $\copenc$-cliques.

Let $\cop\;\in\{\copla, \copenc\}$. As indispensable as the property of the family 
of maximal $\cop$-cliques provided by \ref{lem:max-copla-cliques} and 
\ref{lem:max-copenc-cliques} is the characterization of this family in terms of lines
and the relation $\cop$, that is an elementary definition of maximal $\cop$-cliques
within~$\struct{\Lines,\cop}$.
For lines $L_1,L_2,\dots,L_n\in\Lines$ we define
\begin{multline}\label{eq:trgcopla}
  \TRG_n^{\cop}(L_1,L_2,\dots, L_n) 
    \quad\text{iff}\quad 
    \neq(L_1,L_2,\dots, L_n)\enspace\text{and}\enspace
    L_i\cop L_j \text{ for all } i,j=1,\dots, n,\\
    \text{and}\enspace
    \text{for all}\ M_1, M_2\in\Lines\ \text{if}\ M_1, M_2\cop L_1,L_2,\dots,L_n\ \text{then}\ M_1\cop M_2.
\end{multline}
Note that the relation $\TRG_2^{\cop}$ is empty. 
In spine spaces $3$ lines satisfying $\TRG_3^{\cop}$ 
determine a $\cop$-clique.
From this point of view, $\TRG_n^{\cop}$ could be defined as a ternary relation in \eqref{eq:trgcopla}.
The reason why we introduce so general formula will be explained 
later in Erratum~\ref{sec:erratum}.

As the relation $\cop$, and in consequence $\TRG_3^{\cop}$, makes a little sense on a plane
we  assume that every plane in $\fixspace$ is contained is a star or top of
dimension at least 3. In view of Table~\ref{tab:subs} it reads as follows:

\begin{equation}\label{eq:3-spaces-num}
  3\le n-k \quad\text{and}\quad 3\le k-m.
\end{equation}

\begin{lem}\label{lem:TRG}
  Let\/ $L_1, L_2, L_3\in\Lines$.
  \begin{sentences}
  \item
     $\TRG_3^{\copla}(L_1,L_2,L_3)$ iff\/ $\cl{L_1}, \cl{L_2}, \cl{L_3}$ form 
     a tripod or a triangle. 
  \item
     $\TRG_3^{\copenc}(L_1,L_2,L_3)$ iff\/ $L_1, L_2, L_3$ form a $\copenc$-clique, 
     they are not in a pencil of lines, they are not on an affine plane, and 
     in case they are on a punctured plane one of\/ $L_1, L_2, L_3$ is an affine line.
  \end{sentences}
\end{lem}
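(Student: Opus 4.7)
The plan is to reduce both parts of the lemma to a single uniqueness statement: $\TRG_3^{\cop}(L_1,L_2,L_3)$ holds if and only if there is a \emph{unique} maximal $\cop$-clique containing $\{L_1,L_2,L_3\}$. Setting $N := \{M\in\Lines : M\cop L_j \text{ for } j=1,2,3\}$, the last clause of~\eqref{eq:trgcopla} asserts that $N$ is itself a $\cop$-clique. Every maximal $\cop$-clique containing $\{L_1,L_2,L_3\}$ sits inside $N$, so $N$ is a clique precisely when there is at most one such maximal clique; the pairwise condition $L_i\cop L_j$ guarantees that at least one exists. Given this equivalence, the task reduces to classifying, via Propositions~\ref{lem:max-copla-cliques} and~\ref{lem:max-copenc-cliques}, which triples sit in a single maximal clique.

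For part~(i), every maximal $\copla$-clique is a flat or a semibundle. If $\cl{L_1},\cl{L_2},\cl{L_3}$ form a triangle in a plane $E$, there is no common point, so no semibundle contains all three, and the only maximal clique is the flat $\LinesOf(E)$. If they form a tripod---concurrent at some point $U$ and not coplanar---then by Lemma~\ref{lem:spineplanes} and Fact~\ref{fact:spineintersections} they span a unique star or top $X$, and the unique maximal clique is the semibundle $\LinesOf_U(X)$: no flat can contain three non-coplanar lines, and any competing semibundle would have to share both the vertex $U$ and the maximal strong subspace $X$. The only remaining pairwise-$\copla$ configuration of three lines is a pencil (concurrent and coplanar), and uniqueness fails there since such a pencil lies simultaneously in the flat of its plane and in several distinct proper semibundles at its common vertex.

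For part~(ii), by Proposition~\ref{lem:max-copenc-cliques} the maximal $\copenc$-cliques are semiflats and proper semibundles, and the tripod and projective-plane-triangle cases proceed analogously. Pencils are again non-unique, accounting for the clause ``not in a pencil of lines''. The remaining conditions in~(ii) correspond to semiaffine base planes: on an affine plane the semiflat freely selects one representative per parallel class, so three $\copenc$-related lines there lie in many semiflats; on a punctured plane the single slot for an affine line can be filled in many ways unless one of $L_1,L_2,L_3$ is itself an affine line, in which case that choice is forced.

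The main technical obstacle is the semiaffine bookkeeping in~(ii): showing that on affine and punctured planes the selectional freedom in forming semiflats really does produce multiple maximal $\copenc$-cliques in all the excluded configurations, and conversely that the ``one affine $L_i$'' hypothesis on a punctured plane eliminates that freedom. The tripod cases, by contrast, are clean and handled uniformly via Lemma~\ref{lem:spineplanes} together with the fact that a line belongs to at most one star and one top.
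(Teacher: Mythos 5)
Your proposal is correct, but it takes a genuinely different route from the paper. The paper's proof fixes the configuration trichotomy (the closures of three pairwise coplanar lines form a tripod, a triangle or a pencil) and then disposes of the excluded configurations by explicitly exhibiting a witness pair that violates the last clause of \eqref{eq:trgcopla}: for a pencil, a line through the vertex off the base plane against a line of the base plane missing the vertex; for (ii), additionally a pair of parallel transversals on an affine plane, or on a punctured plane when none of the $L_i$ is affine. The positive direction, that tripods and triangles do satisfy the clause, is left implicit there. You instead recast $\TRG_3^{\cop}$ as uniqueness of the maximal $\cop$-clique through the triple and count such cliques via \ref{lem:max-copla-cliques}, \ref{lem:max-copenc-cliques}, \ref{lem:spineplanes} and \ref{fact:spineintersections}; the two formulations are interchangeable (two distinct maximal cliques through the triple yield a non-$\cop$-related witness pair, and conversely), so your argument is sound and has the merit of treating both implications explicitly, at the price of leaning on the clique classification where the paper's witnesses are hands-on and self-contained. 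Three minor remarks: in the parallel-pencil case of (i) the competing semibundle is improper rather than proper (harmless, since maximal $\copla$-cliques include improper semibundles); your pencil case, exactly like the paper's witness $M_1$, uses assumption \eqref{eq:3-spaces-num} to extend the base plane to a star or top of dimension at least $3$ so that the semibundle really differs from the flat, and this should be said; and on an affine plane of order $2$ a triangle is itself the unique maximal $\copenc$-clique containing it, so the ``many semiflats'' step needs order at least $3$ --- an edge case your proof shares with the paper, whose parallel witnesses do not exist there either.
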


\begin{proof}
  (i): Observe that $L_1,L_2,L_3$ are pairwise coplanar iff $\cl{L_1}, \cl{L_2}, \cl{L_3}$ 
  form a tripod, a triangle or a pencil of lines. If the later, a line $M_1$
  through the vertex of the pencil but not on its base plane is not coplanar with
  a line $M_2$ that lies on the base plane but misses the vertex.
  
  (ii): A pencil of lines is singled out taking the same lines $M_1, M_2$ as in (i). 
  On an affine plane, as well as on a punctured plane if none of $L_1, L_2, L_3$ goes 
  through the point at infinity, the lines $M_1, M_2$ from \eqref{eq:trgcopla} can be parallel.
\end{proof}

Provided that $\TRG_3^{\cop}(L_1,L_2, L_3)$, the maximal $\cop$-clique spanned by 
the lines $L_1, L_2, L_3$ is the set
\begin{equation}
  \agen{L_1,L_2,L_3}{\cop} := \bset{L\in \Lines\colon L\cop L_1,L_2, L_3}.
\end{equation}
Note that all maximal $\cop$-cliques except affine semiflats can be spanned in this way.
Now, let us define
 \begin{equation}\label{eq:def:kliki}
  \kliki_{\cop} = \bset{ \agen{L_1,L_2,L_3}{\cop}\colon 
    L_1,L_2, L_3\in\Lines\ \text{ and }\ \TRG_3^{\cop}(L_1,L_2, L_3) }.
\end{equation}
Lemmas \ref{lem:max-copla-cliques} and \ref{lem:max-copenc-cliques} explain which maximal
$\cop$-cliques fall into $\kliki_{\cop}$.
\begin{lem}\label{lem:kliki}
  \begin{sentences}
  \item
     $\kliki_{\copla}$ is the family of all maximal $\copla$-cliques, i.e.\ flats and semibundles.
  \item
     $\kliki_{\copenc}$ is the family of all projective flats, punctured semiflats 
     and proper semibundles.
  \end{sentences}
\end{lem}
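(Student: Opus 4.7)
The plan is to combine the classifications in Propositions \ref{lem:max-copla-cliques} and \ref{lem:max-copenc-cliques} with the geometric characterization of $\TRG_3^\cop$ in Lemma \ref{lem:TRG}, exhibiting in each listed maximal clique a triple that realizes it via $\agen{\cdot,\cdot,\cdot}{\cop}$.

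I would start with a common observation used for both (i) and (ii): whenever $\TRG_3^\cop(L_1,L_2,L_3)$ holds, $\agen{L_1,L_2,L_3}{\cop}$ is a maximal $\cop$-clique, and it is in fact the \emph{only} maximal $\cop$-clique that contains the triple. The clique property is the third clause of \eqref{eq:trgcopla}; maximality, as well as uniqueness of the containing maximal clique, follows because any $M$ which is $\cop$-related to every element of $\agen{L_1,L_2,L_3}{\cop}$ is in particular $\cop$-related to $L_1,L_2,L_3$ and therefore already belongs to it. This observation reduces each remaining task to producing one valid triple inside each target clique.

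For (i), \ref{lem:max-copla-cliques} already gives $\kliki_\copla\subseteq\{\text{flats}\}\cup\{\text{semibundles}\}$, since every element of $\kliki_\copla$ is a maximal $\copla$-clique. For the reverse inclusion I pick: in a flat $\LinesOf(E)$, three lines on $E$ whose closures form a triangle in $\cl{E}$ (possible on projective or semiaffine planes by choosing three non-concurrent lines of $\cl{E}$), and in a semibundle $\LinesOf_U(X)$, three lines through $U$ in $X$ whose closures form a tripod at $U$, which exist thanks to the dimension assumption \eqref{eq:3-spaces-num} forcing $X$ to be at least three-dimensional. By \ref{lem:TRG}(i) each triple satisfies $\TRG_3^\copla$, and by the uniqueness observation above the flat or semibundle coincides with the $\copla$-span of the triple, placing it in $\kliki_\copla$.

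For (ii), by \ref{lem:max-copenc-cliques} each element of $\kliki_\copenc$ is a semiflat or a proper semibundle, and it remains to split these further. In a projective flat (equal to a projective semiflat) three lines of a triangle are pairwise $\copenc$, not concurrent, and live on a projective plane, so \ref{lem:TRG}(ii) gives $\TRG_3^\copenc$. In a proper semibundle at a proper vertex $U$ a tripod at $U$ works as in (i). In a punctured semiflat I combine its affine line $L_0$ with two projective lines $P_1,P_2$ chosen to meet $L_0$ at two distinct proper points; then $L_0,P_1,P_2$ are pairwise $\copenc$, not concurrent, and live on a punctured plane with one member affine, fulfilling all four clauses of \ref{lem:TRG}(ii). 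The uniqueness observation then identifies each of these cliques with $\agen{L_1,L_2,L_3}{\copenc}$. Finally, an affine semiflat cannot appear in $\kliki_\copenc$: any three of its lines lie on an affine plane, which is exactly the configuration \ref{lem:TRG}(ii) forbids, so no triple in it satisfies $\TRG_3^\copenc$.

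The main subtlety is ensuring that each chosen triple spans exactly the intended clique rather than a strictly larger $\cop$-clique. The uniqueness observation handles this once and for all, turning the whole argument into a checklist of producing one $\TRG_3^\cop$-triple in each target object and verifying, via Lemma \ref{lem:TRG}, the geometric hypotheses of that triple.
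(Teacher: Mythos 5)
Your overall plan---use Propositions \ref{lem:max-copla-cliques} and \ref{lem:max-copenc-cliques} for one inclusion and exhibit a $\TRG_3^{\cop}$ triple (triangle, tripod, or one affine plus two projective lines) in each target clique for the other---is exactly the argument the paper leaves implicit, and your verification of those triples against Lemma \ref{lem:TRG} is fine. The gap is in the step you yourself single out as the main subtlety. Your uniqueness observation only says that $\agen{L_1,L_2,L_3}{\cop}$ is the unique \emph{maximal} $\cop$-clique containing the triple. Exhibiting the triple inside a flat, semiflat or semibundle $K$ gives $K\subseteq\agen{L_1,L_2,L_3}{\cop}$ trivially, but it does not exclude $K\subsetneq\agen{L_1,L_2,L_3}{\cop}$: that would merely mean $K$ is a non-maximal clique, which contradicts nothing you have established. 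Lemma \ref{lem:maxcliques} states that flats, semiflats and semibundles are cliques, not that they are maximal ones, and their maximality is precisely part of what the statement asserts (``the family of all maximal $\copla$-cliques, i.e.\ flats and semibundles''); so the appeal to the uniqueness observation at this point is circular rather than a once-and-for-all dispatch.

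The repair is short but must be made explicit. Since the span is a maximal clique, \ref{lem:max-copla-cliques} (resp.\ \ref{lem:max-copenc-cliques}) says it is a flat or a semibundle (resp.\ a semiflat or a proper semibundle), and one then identifies it: a semibundle cannot contain a triangle, because the closures of its lines all pass through the vertex, so the span of your triangle is a flat whose plane contains the three lines and hence equals $E$; dually, the span of a tripod cannot be a flat, so it is some $\LinesOf_{U'}(X')$, and since the closures of the tripod lines meet pairwise only at $U$ and, by \ref{fact:spineintersections}, two distinct maximal strong subspaces share at most a line, $U'=U$ and $X'=X$. In the punctured case one also needs the remark that on a punctured plane all affine lines pass through the puncture, so a semiflat there is all projective lines together with a single affine line; this is what guarantees that $L_0,P_1,P_2$ span exactly the punctured semiflat determined by $L_0$ and nothing larger. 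Equivalently, you could first prove that flats, projective and punctured semiflats, and semibundles are themselves maximal cliques and only then invoke your uniqueness observation; either way, without such an identification argument the reverse inclusions are not yet proved.
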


In consequence we get the following.

\begin{prop}\label{lem:coplacliques}
  The family of maximal $\cop$-cliques is definable in $\struct{\Lines,\cop}$.
\end{prop}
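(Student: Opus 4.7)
The goal is to exhibit, for each $\cop\in\{\copla,\copenc\}$, a formula in the language of $\struct{\Lines,\cop}$ (with a set variable $K$) whose models are exactly the maximal $\cop$-cliques. The preceding lemmas already do essentially all of the work, so the plan is mostly to verify that the relevant formulas are built from $\cop$, membership, and equality alone, and to patch the one gap that arises in the $\copenc$-case.

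For $\cop=\copla$ I would simply invoke Lemma~\ref{lem:kliki}(i): the family of maximal $\copla$-cliques coincides with $\kliki_{\copla}$, which by \eqref{eq:def:kliki} is the range of $\agen{L_1,L_2,L_3}{\copla}$ as $(L_1,L_2,L_3)$ ranges over the $\TRG_3^{\copla}$-triples. Both $\TRG_3^{\copla}$ (formula \eqref{eq:trgcopla}) and the set $\agen{\cdot,\cdot,\cdot}{\copla}$ are written using only $\Lines$, equality, and the predicate $\copla$. This immediately yields a definition.

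For $\cop=\copenc$ the analogous move is not enough: by Lemma~\ref{lem:kliki}(ii), $\kliki_{\copenc}$ captures projective flats, punctured semiflats, and proper semibundles, but misses the affine semiflats (which are not spanned by any triple satisfying $\TRG_3^{\copenc}$, cf.\ the parallel pair $M_1,M_2$ in the proof of Lemma~\ref{lem:TRG}(ii)). My plan to fill this gap is to invoke the swap criterion \eqref{eq:podmianka} of Lemma~\ref{rem:semiflat}: a maximal $\copenc$-clique satisfies \eqref{eq:podmianka} iff it is a semiaffine semiflat (punctured or affine). The predicate ``$K$ is a maximal $\copenc$-clique'' is itself first-order over $\struct{\Lines,\copenc}$ — $K$ is a $\copenc$-clique and no proper superset of $K$ is — hence \eqref{eq:podmianka} is first-order as well. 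Therefore $\kliki_{\copenc}$ together with the set of all $K$ satisfying \eqref{eq:podmianka} exhausts the family of maximal $\copenc$-cliques, and both constituents are definable in $\struct{\Lines,\copenc}$.

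The main (and only mild) obstacle is precisely this gap caused by affine semiflats, which Lemma~\ref{rem:semiflat} was designed to close; no further technical input should be needed. Beyond that, the proof amounts to observing that every ingredient already in the paper lives inside the language of $\struct{\Lines,\cop}$.
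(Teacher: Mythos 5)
Your proposal is correct for the statement as literally written, but it handles the $\copenc$-case by a different route than the paper. The paper derives the proposition directly from Lemma~\ref{lem:kliki}: the whole content is that the family $\kliki_{\cop}$ of \eqref{eq:def:kliki} is given \emph{elementarily}, i.e.\ each clique is the span $\agen{L_1,L_2,L_3}{\cop}$ of three line parameters satisfying $\TRG_3^{\cop}$, and this is the sense of ``definable'' announced before \eqref{eq:trgcopla}; the affine semiflats, which admittedly fall outside $\kliki_{\copenc}$, are simply not what the later constructions (\eqref{eq:ternaryconc:copenc}, \eqref{eq:kliki-penc}, \eqref{eq:semibundle}) need. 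You instead patch the $\copenc$-gap by taking the union of $\kliki_{\copenc}$ with the maximal cliques satisfying \eqref{eq:podmianka}, using Lemma~\ref{rem:semiflat}; the bookkeeping there is right (semiaffine semiflats are exactly the punctured and affine ones, so the union is all semiflats plus proper semibundles, which by \ref{lem:max-copenc-cliques} is everything). Two remarks on what this buys and what it costs. First, there is an internal redundancy: once you grant that ``$K$ is a maximal $\copenc$-clique'' is expressible directly ($K$ is a clique and every line outside $K$ fails to be $\copenc$-related to some member of $K$), the proposition for both relations follows from that single observation, and the detour through $\kliki_{\copenc}$ and \eqref{eq:podmianka} adds nothing. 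Second, under the paper's stricter intended reading (parametric definitions by finitely many lines), that maximality predicate is not available as a substitute for a span, and \eqref{eq:podmianka} cannot supply one either: it only distinguishes semiaffine semiflats among cliques already given as sets, while an affine semiflat, being a selector of the direction partition of an affine plane, is not determined by any finite set of its lines. So your argument proves the weaker, set-based formulation cleanly, whereas the paper's argument proves the elementary statement it actually uses later, at the price of silently omitting the affine semiflats.
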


%% Section %%%%%%%%%%%%%%%%%%%%%%%%%%%%%%%%%%%%%%%%%%%%%%%%%%%%%%%%%%%%%%%%%%%%%

\section{Proper semibundles}

Let us recall that our goal is to reconstruct the point universe of a spine space
given a line universe equipped with the relation $\cop$. The idea is to use
vertices of semibundles to do that. This means that only proper semibundles
are of our concern. 
The problem is we need to distinguish them in the family of all $\cop$-cliques,
which we are going to do now using pencils of lines as an essential tool.

\subsection{Pencils of lines}

The geometry induced by pencils of lines is interesting in itself and, we believe,
it is worth to give it a little more attention here.
For three lines $L_1,L_2,L_3\in\Lines$ we make the following two definitions:
\begin{multline}\label{eq:ternaryconc:copla}
  \penc_{\copla}(L_1,L_2,L_3)
    \quad\text{iff}\quad
    L_i\copla L_j \text{ for all } i,j=1,2,3\enspace\text{and}\enspace
    \neg\TRG_3^{\copla}(L_1,L_2, L_3),
\end{multline}
\vspace*{-6ex}
\begin{multline}\label{eq:ternaryconc:copenc}
  \penc_{\copenc}(L_1,L_2,L_3)
    \quad\text{iff}\quad 
      \text{there are}\ M_1, M_2, M_3\in\Lines\ \text{such that}\\ 
      \TRG_3^{\copenc}(M_1,M_2, M_3)\ \text{and}\ \agen{M_1,M_2,M_3}{\copenc}\ \text{does not satisfy}\ \eqref{eq:podmianka}\\
      \text{and}\enspace L_1,L_2,L_3\in\agen{M_1,M_2,M_3}{\copenc}\enspace\text{and}\enspace\neg\TRG_3^{\copenc}(L_1,L_2, L_3). 
\end{multline}

\begin{lem}\label{lem:penc}
  Let\/ $L_1, L_2, L_3\in\Lines$.
  \begin{sentences}
  \item
     $\penc_{\copla}(L_1,L_2,L_3)$ iff\/ $L_1, L_2, L_3$ form a pencil of lines or a parallel pencil. 
  \item
     $\penc_{\copenc}(L_1,L_2,L_3)$ iff\/ $L_1, L_2, L_3$ form a pencil of lines.     
  \end{sentences}
\end{lem}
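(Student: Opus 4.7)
For part (i), the argument hinges on \ref{lem:TRG}(i). If $L_1, L_2, L_3$ are pairwise $\copla$-related, their closures in the ambient Grassmann space $\cl\M$ must form a tripod, a triangle, or a pencil of lines (these are the only three configurations of three pairwise coplanar lines in a Grassmann space). The first two are precisely $\TRG_3^{\copla}$, so $\neg\TRG_3^{\copla}$ isolates the pencil case: the three closures share a common point $U \in \cl\M$. Splitting on whether $U$ is proper or improper gives a pencil of lines or a parallel pencil respectively; the converse is immediate from the definitions of (parallel) pencil and of $\TRG_3^{\copla}$.

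For part (ii), forward direction, the combination of \ref{rem:semiflat} and \ref{lem:kliki}(ii) is the backbone. Since $\agen{M_1,M_2,M_3}{\copenc}$ does not satisfy \eqref{eq:podmianka}, it is not a semiaffine semiflat, so by \ref{lem:kliki}(ii) it is either a projective flat or a proper semibundle $\LinesOf_U(X)$. In the projective-flat case, \ref{lem:TRG}(ii) immediately reduces $\neg\TRG_3^{\copenc}(L_1, L_2, L_3)$ to ``$L_1, L_2, L_3$ form a pencil of lines'', since the affine and punctured-plane clauses cannot apply on a projective plane. In the proper-semibundle case, three lines through the proper vertex $U$ satisfy $\TRG_3^{\copenc}$ iff they are not coplanar; hence $\neg\TRG_3^{\copenc}$ forces coplanarity and once again produces a pencil with vertex $U$.

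For the backward direction of (ii), given a pencil $p = \penc(U, E)$ with proper vertex $U$ containing $L_1, L_2, L_3$, I fix a maximal strong subspace $X$ of $\M$ with $\cl E \subseteq \cl X$. Assumption \eqref{eq:3-spaces-num} ensures $\dim \cl X \ge 3$, so I can pick three non-coplanar proper lines $M_1, M_2, M_3 \subseteq X$ through $U$. These satisfy $\TRG_3^{\copenc}(M_1, M_2, M_3)$, and $\agen{M_1, M_2, M_3}{\copenc} = \LinesOf_U(X)$ is a proper semibundle, which by \ref{rem:semiflat} does not satisfy \eqref{eq:podmianka}. Finally $L_1, L_2, L_3 \in \LinesOf_U(X)$ and $\neg\TRG_3^{\copenc}(L_1, L_2, L_3)$ because the $L_i$ lie in the pencil $p$, verifying \eqref{eq:ternaryconc:copenc}.

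The delicate step---and the expected main obstacle---is this last existence claim. In the slit rows of Table~\ref{tab:subs} ($\alpha$-star, $\alpha$-top, $\omega$-top), one must genuinely exhibit three non-coplanar proper lines through $U$ inside $X = \cl X \setminus \cal D$. This reduces to a routine dimension count showing that, under \eqref{eq:3-spaces-num}, the removed subspace $\cal D$ together with any single plane through $U$ cannot exhaust $\cl X$, so a third direction into $X$ through $U$ is always available.
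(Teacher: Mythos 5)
Your parts (i) and the forward direction of (ii) follow the paper's route: (i) is read off from \ref{lem:TRG}(i), and in (ii) you use \ref{rem:semiflat} together with \ref{lem:kliki}(ii) to see that $\agen{M_1,M_2,M_3}{\copenc}$ is a projective flat or a proper semibundle, and then \ref{lem:TRG}(ii) to conclude that $L_1,L_2,L_3$ form a pencil; your extra remarks on the semibundle case are a correct unpacking of what the paper leaves implicit.

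The backward direction of (ii) is where you deviate, and there is a genuine gap. You treat all base planes uniformly by extending $E$ to a maximal strong subspace $X$ with $\cl{E}\subseteq\cl{X}$ and claiming that \eqref{eq:3-spaces-num} ensures $\dim(X)\ge 3$, so that a tripod through $U$ exists in $\LinesOf_U(X)$. This is false for projective base planes of star type: \eqref{eq:3-spaces-num} bounds $n-k$ and $k-m$ but not $\dim(W)-m$, so when $\dim(W)=m+2$ an $\omega$-star is itself a projective plane of $\fixspace$, it is the \emph{only} maximal strong subspace containing that plane (star-type planes lie in no top), and no tripod of lines through $U$ inside it exists; your construction produces no admissible $M_1,M_2,M_3$ for pencils lying on such a plane. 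The paper avoids this by splitting cases: when $E$ is projective it takes as $M_1,M_2,M_3$ any triangle \emph{on} $E$ (which satisfies $\TRG_3^{\copenc}$ and spans the projective flat $\LinesOf(E)$, excluded from \eqref{eq:podmianka} by \ref{rem:semiflat}), and it uses the tripod-in-a-semibundle construction only when $E$ is semiaffine, in which case $E$ lies in an $\alpha$-star or an $\omega$-top whose dimension is at least $3$ by \eqref{eq:3-spaces-num}. Adopting that case distinction repairs your argument; the remaining dimension-count remark you make for the slit cases is then indeed routine.
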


\begin{proof}
  (i): Immediate by \ref{lem:TRG}(i).
  
  (ii) 
  \ltor For $M_1, M_2, M_3$ in \eqref{eq:ternaryconc:copenc} let $K := \agen{M_1,M_2,M_3}{\copenc}$. 
  Note that $K\in\kliki_{\copenc}$, but in view of \ref{rem:semiflat}, it is not a punctured 
  semiflat. So, by \ref{lem:kliki} $K$ is a projective flat or a proper semibundle that is a maximal $\copenc$-clique.
  If $L_1,L_2,L_3\in K$ and $\neg\TRG_3^{\copenc}(L_1,L_2, L_3)$, then by \ref{lem:TRG}(ii) 
  the lines $L_1,L_2,L_3$ form a pencil of lines.
  
  \rtol
  Assume that $L_1,L_2,L_3$ form a pencil of lines, $U$ is its vertex and $E$ is its base plane.
  If $E$ is projective, then as $M_1, M_2, M_3$ any triangle on $E$ can be taken. 
  If $E$ is semiaffine, then by \eqref{eq:3-spaces-num} the plane $E$ is extendable to some 
  star or top $X$ of dimension at least 3 and as $M_1, M_2, M_3$ a tripod in $\LinesOf_U(X)$ should be taken.
\end{proof}

Pencils can be defined in a standard way
using ternary concurrency provided by \eqref{eq:ternaryconc:copla} or \eqref{eq:ternaryconc:copenc}
so, we claim that:
\begin{lem}\label{lem:allpencils}
  \begin{sentences}
  \item
    The family $\pencpeki_{\copla}$ of all pencils of lines and parallel pencils
    is definable in $\struct{\Lines, \copla}$.
  \item
    The family $\pencpeki_{\copenc}$ of all pencils of lines is definable in $\struct{\Lines, \copenc}$.
  \end{sentences}
\end{lem}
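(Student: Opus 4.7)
The plan is to extract pencils as maximal subsets of $\Lines$ every triple of which lies in a common pencil, using the ternary relation $\penc_{\cop}$ already provided by Lemma~\ref{lem:penc}.

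First I would observe that the relation $\penc_{\cop}$ is itself definable in $\struct{\Lines,\cop}$. Indeed, $\TRG_3^{\cop}$ is spelled out in \eqref{eq:trgcopla} in terms of $\cop$ and equality only, and each of $\penc_{\copla}$, $\penc_{\copenc}$ is then built from $\TRG_3^{\cop}$ and $\cop$ via \eqref{eq:ternaryconc:copla} and \eqref{eq:ternaryconc:copenc}. Consequently, both parts reduce to showing that $\pencpeki_{\cop}$ coincides with the family of maximal subsets $p\subseteq\Lines$ with $\abs{p}\ge 3$ such that $\penc_{\cop}(L_1,L_2,L_3)$ holds for every three distinct $L_1,L_2,L_3\in p$.

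For the inclusion from (parallel) pencils into this family, every (parallel) pencil has at least three lines by the dimension hypothesis \eqref{eq:3-spaces-num}, and Lemma~\ref{lem:penc} directly gives $\penc_{\cop}$ on every triple. Maximality is then immediate: any line $L$ outside the base plane, paired with two lines from the pencil, would form a tripod or a triangle, and Lemma~\ref{lem:TRG} precludes $\penc_{\cop}$ on such a triple.

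For the converse, let $p$ be such a maximal set and pick three distinct $L_1,L_2,L_3\in p$. Lemma~\ref{lem:penc} produces their common (parallel) pencil $\penc(U,E)$. For any further $L\in p$ the triple $(L,L_1,L_2)$ is again a (parallel) pencil, and two concurrent (respectively parallel) lines already pin down both their common point and the plane they span, so $L\in\penc(U,E)$. Thus $p\subseteq\penc(U,E)$, which together with maximality forces $p=\penc(U,E)$. The only point requiring a bit of care is part (ii): one has to check that the extra clause built into \eqref{eq:ternaryconc:copenc} via \eqref{eq:podmianka} really does exclude parallel pencils, so that $\pencpeki_{\copenc}$ consists of pencils of lines alone. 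This is precisely the content of Lemma~\ref{rem:semiflat} combined with Lemma~\ref{lem:kliki}, and it is the main subtle point dictating the particular shape of \eqref{eq:ternaryconc:copenc}.
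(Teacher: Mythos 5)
Your overall strategy---extracting pencils from the ternary relations $\penc_{\copla}$ and $\penc_{\copenc}$ of Lemma~\ref{lem:penc}, which are indeed visibly definable in $\struct{\Lines,\cop}$ via \eqref{eq:trgcopla}, \eqref{eq:ternaryconc:copla} and \eqref{eq:ternaryconc:copenc}---is exactly the ``standard way'' the paper alludes to just before the lemma (the paper offers no further detail), and your converse direction is sound: two distinct coplanar lines determine both the vertex and the base plane, so any set all of whose triples satisfy $\penc_{\cop}$ is contained in a single pencil or parallel pencil, and maximality then gives equality.

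There is, however, one step that fails. You characterize $\pencpeki_{\cop}$ as the maximal sets of size at least $3$ all of whose triples satisfy $\penc_{\cop}$, and you justify the claim ``every (parallel) pencil has at least three lines'' by \eqref{eq:3-spaces-num}. That condition constrains the dimensions of stars and tops and says nothing about how many lines a pencil contains; the size of a pencil is governed by the order of the underlying field. Over the two-element field a parallel class of an affine plane consists of exactly two lines, and affine planes do occur in spine spaces (planes of semiaffine stars or tops whose closure meets the removed subspace in a line), so in part (i) your family omits these two-element parallel pencils and is then strictly smaller than $\pencpeki_{\copla}$; part (ii) is unaffected, since a pencil with a proper vertex always has at least three lines. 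The repair is simple and closer to the intended standard construction: for distinct lines $L_1\copla L_2$ define the pencil they span as $\set{L_1,L_2}\cup\set{L\in\Lines\colon \penc_{\cop}(L,L_1,L_2)}$ and take the family of all such sets; this is definable in $\struct{\Lines,\cop}$ and yields every pencil and parallel pencil irrespective of the field (if one assumes at least three elements in the field, your original argument goes through as written). A smaller cosmetic point: in your maximality argument a line off the base plane need not form a tripod or a triangle with two lines of the pencil, since it may fail to be coplanar with one of them; but in that case $\penc_{\cop}$ fails for the triple anyway, so the conclusion stands.
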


\subsection{Parallel pencils}

As proper semibundles contain no parallel pencils we need to get rid of them from 
$\pencpeki_{\copla}$. 
It is a bit tricky however to express that the vertex of a pencil is improper in terms of 
$\struct{\Lines, \pencpeki_{\copla}}$. 
Affine planes and punctured planes, base planes of pencils in $\pencpeki_{\copla}$,
have to be treated separately.

Two pencils $p_1, p_2\in\pencpeki_{\copla}$ are \emph{coplanar} iff every two lines
$l_1\in p_1$, $l_2\in p_2$ are coplanar. That is: let $p_1, p_2\in\pencpeki_{\copla}$
\begin{equation}
  p_1\pcopla p_2 \iff \text{for all } l_1\in p_1, l_2\in p_2\ \text{ we have } l_1\copla l_2.
\end{equation}

On an affine plane parallel pencils are those pencils that contain the line at infinity.
So, formally, a pencil $p_1\in\pencpeki_{\copla}$ is a parallel pencil if there is another 
pencil $p_2\in\pencpeki_{\copla}$ such that $p_1\pcopla p_2$ and $p_1\cap p_2=\emptyset$.
Hence, a plane in $\struct{\Lines, \pencpeki_{\copla}}$ is affine iff it contains two disjoint pencils.
A pencil $p\in\pencpeki_{\copla}$ lies on an affine plane iff there are two distinct pencils 
$p_1, p_2\in\pencpeki_{\copla}$
such that $p_1\pcopla p$, $p_2\pcopla p$ and $p_1\cap p_2=\emptyset$.
We say that a line $l$ lies on an affine plane iff there is a pencil $p\in\pencpeki_{\copla}$ 
such that $l\in p$ and $p$ lies on an affine plane.

If the base plane of a pencil $p\in\pencpeki_{\copla}$ is not affine but every line $l\in p$ 
lies on some affine plane, then the vertex of $p$ is an improper point. If that is the case
the pencil $p$ is a parallel pencil and its base plane is a punctured plane.

Finally, we have proved that:

\begin{lem}\label{lem:parpencils}
  The family $\pencpeki_\parallel$ of all parallel pencils is definable in $\struct{\Lines, \copla}$.
\end{lem}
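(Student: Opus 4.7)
The plan is to work at the level of pencils: Lemma \ref{lem:allpencils}(i) makes $\pencpeki_\copla$ definable from $\copla$, and the pencil-coplanarity relation $p_1 \pcopla p_2$, defined as $\forall l_1 \in p_1,\, l_2 \in p_2\ (l_1 \copla l_2)$, is then $\copla$-definable immediately. With this in hand I would extract the parallel pencils in two rounds, distinguishing them by the type of their base plane (affine vs.\ punctured), since by definition a parallel pencil is one whose vertex sits in the horizon and hence whose base plane must be semiaffine.

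For the affine case, I would use the observation that on a single plane $E$ two distinct pencils are disjoint exactly when $E$ is affine and both pencils are parallel: on a projective or punctured plane any two pencils share a line, whereas on an affine plane two distinct pencils are disjoint iff they correspond to different directions at infinity. Accordingly, I would try to characterize ``$p$ is a parallel pencil with affine base plane'' as: there exists $p' \in \pencpeki_\copla$, $p' \neq p$, with $p' \pcopla p$ and $p \cap p' = \emptyset$. From here I would extract ``$p$ lies on an affine plane'' by requiring two such disjoint witnesses $p_1, p_2$ each $\pcopla$-coplanar with $p$, and then ``$l$ lies on an affine plane'' by demanding $l \in p$ for some such $p$.

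For the punctured case I would exploit the line composition: a parallel pencil on a punctured plane consists only of affine lines (the lines through the removed improper vertex), whereas any pencil with a proper vertex on a punctured plane contains at least one projective line. Since a projective line lies on no affine plane (every line of an affine plane is affine), and since by assumption \eqref{eq:3-spaces-num} every affine line is contained in an affine plane inside some three-dimensional semiaffine star or top, I can characterize ``$p$ is a parallel pencil with punctured base plane'' as: $p$ fails the affine-base criterion above, yet every line of $p$ lies on some affine plane. The union of the two cases is $\pencpeki_\parallel$, and the whole chain stays inside $\struct{\Lines, \copla}$.

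The main obstacle is the forward direction in the affine case: I must verify that $p_1 \pcopla p_2$ together with $p_1 \cap p_2 = \emptyset$ really pins down a common affine base plane, and is not met by, say, two distinct pencils sharing only a common vertex inside a larger strong subspace whose base planes happen to meet in a point only. Excluding such configurations requires a careful case analysis, using Fact \ref{fact:spineintersections} for how stars and tops intersect together with Lemma \ref{lem:spineplanes}, and may force a strengthening of the characterization so as to pin down a shared base plane rather than mere pointwise coplanarity of all line pairs.
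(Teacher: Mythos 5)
Your chain of definitions is exactly the paper's: after \ref{lem:allpencils}(i) the paper introduces $\pcopla$, declares a pencil to be parallel with affine base plane when it admits a disjoint $\pcopla$-related companion, derives from this ``lies on an affine plane'' first for pencils and then for lines, and finally catches the parallel pencils with punctured base plane as those pencils whose base plane is not affine while every line of the pencil lies on some affine plane. Your justification of the punctured case (such parallel pencils consist of affine lines only, a proper-vertex pencil on a punctured plane contains a projective line, a projective line lies on no affine plane, and by \eqref{eq:3-spaces-num} every affine line does lie on an affine plane) is also the paper's intended one, and your within-one-plane observations about which pencils can be disjoint are correct. So in approach you and the paper coincide; the difference is that you explicitly flag the forward direction of the affine case as unverified, while the paper simply asserts it.

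That flagged step is a genuine gap, not a removable worry, and the slack sits exactly where you suspect: in cross-plane witnesses. Take a star or top $X$ of projective dimension at least $4$ (possible under \eqref{eq:3-spaces-num} and exactly the situation demanded later by \eqref{eq:4-spaces-num}), a proper point $U\in X$, and two planes $E_1,E_2\subseteq X$ with $\cl{E_1}\cap\cl{E_2}=\{U\}$. The pencils $p_1,p_2$ with vertex $U$ and base planes $E_1,E_2$ are disjoint, since a common line would have its closure inside $\cl{E_1}\cap\cl{E_2}$, and they satisfy $p_1\pcopla p_2$, because all their lines lie in the semibundle $\LinesOf_U(X)$, which is a $\copla$-clique by \ref{lem:maxcliques}(i); yet both are proper pencils. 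The same configuration defeats the surrogate ``$p$ lies on an affine plane'' (take $X$ an $\omega$-star: every pencil with vertex $U$ and base plane in $X$ passes that test although $X$ contains no affine line at all), and through it the punctured-plane clause as well, so proper pencils in large projective stars or tops would end up classified as parallel. In stars and tops of dimension exactly $3$ the problem disappears, because two planes through $U$ then share a proper line through $U$; but in the range the main theorem actually works with it does not. Hence the strengthening you anticipate, pinning down a common base plane for $p$ and the witnessing pencils rather than mere pairwise coplanarity of lines, is really needed, and neither your proposal nor the paper's own text supplies it; until it is supplied, the central equivalence behind \ref{lem:parpencils} remains open in both.
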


Let $\pencpeki$ be the family of all pencils of lines in $\fixspace$.
By \ref{lem:allpencils}(i) and \ref{lem:parpencils} the family 
$\pencpeki=\pencpeki_{\copla}\setminus\pencpeki_\parallel$ is definable for $\copla$.
By \ref{lem:allpencils}(ii) 
the family $\pencpeki=\pencpeki_{\copenc}$ is definable for $\copenc$.
Note that two pencils from $\pencpeki$ are either disjoint or share a line.
This means that $\struct{\Lines, \pencpeki}$ is a partial linear space.
Interestingly enough, we are able to reconstruct this geometry, induced by pencils 
of lines on $\fixspace$, using nothing but its lines and the relation $\cop$.

\begin{prop}\label{prop:pencpenc}
  If\/ $\fixspace$ satisfies \eqref{eq:3-spaces-num}, then   
  $\struct{\Lines, \pencpeki}$ is definable in $\struct{\Lines,\cop}$.
\end{prop}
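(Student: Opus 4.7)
The plan is to split into the two cases $\cop=\copenc$ and $\cop=\copla$, in each case assembling $\pencpeki$ from families that have already been shown to be first-order definable in $\struct{\Lines,\cop}$. In both cases the incidence in $\struct{\Lines,\pencpeki}$ is simply membership of a line in a pencil, so once $\pencpeki$ itself is isolated as a definable collection of subsets of $\Lines$, the whole point-line structure is recovered. There is essentially no new geometric content to introduce; what remains is a short set-theoretic bookkeeping of the earlier lemmas.

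For $\cop=\copenc$ the argument is immediate. I would invoke \ref{lem:allpencils}(ii), which provides a formula in $\struct{\Lines,\copenc}$ defining the family $\pencpeki_{\copenc}$ of all pencils of lines with a proper vertex. Since $\copenc$ is, by design, insensitive to parallel pencils, the class of objects so produced coincides with $\pencpeki$, and nothing more is required.

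For $\cop=\copla$ the step is a single set difference. I would first apply \ref{lem:allpencils}(i) to obtain $\pencpeki_{\copla}$, the family of all pencils of lines together with all parallel pencils, as a definable object in $\struct{\Lines,\copla}$. Then \ref{lem:parpencils} supplies a definition of $\pencpeki_\parallel$ in the same structure. Forming the difference yields
\[
  \pencpeki \;=\; \pencpeki_{\copla}\setminus\pencpeki_\parallel,
\]
which is therefore definable in $\struct{\Lines,\copla}$ as well.

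Since nothing substantive is new, there is no genuine obstacle; the only check worth noting is that the assumption \eqref{eq:3-spaces-num} is not invoked afresh in the combination step above, having already been absorbed into the two cited lemmas (in particular into \ref{lem:penc}(ii), which ensures that pencils admit the ternary description used for their definability). Once this is observed, assembling $\struct{\Lines,\pencpeki}$ from $\Lines$ together with the now-definable family $\pencpeki$ completes the proof.
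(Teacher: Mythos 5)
Your proposal is correct and follows essentially the same route as the paper: the authors also obtain $\pencpeki=\pencpeki_{\copenc}$ directly from \ref{lem:allpencils}(ii) in the $\copenc$ case and $\pencpeki=\pencpeki_{\copla}\setminus\pencpeki_\parallel$ from \ref{lem:allpencils}(i) together with \ref{lem:parpencils} in the $\copla$ case, with the hypothesis \eqref{eq:3-spaces-num} entering only through the earlier pencil lemmas (notably \ref{lem:penc}(ii)). The only difference is presentational: the paper states this assembly in the paragraph preceding the proposition rather than as a separate proof.
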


\subsection{Geometry induced by pencils of lines}

A $\copla$-clique is proper if it contains no parallel pencil while all $\copenc$-cliques
are proper.
The family of all proper maximal $\cop$-clique consists of projective flats, punctured
semiflats and proper semibundles. 
Every proper maximal $\cop$-clique together with pencils of lines it contains carries some geometry. %(cf.\ \ref{prop:pencpenc}).
A projective flat determines a projective plane, a punctured semiflat determines
a projective plane with all but one points on some line removed, and a proper semibundle determines
a projective space.
The geometrical dimension of a
proper flat is always 2 whereas a proper semibundle $\LinesOf_U(X)$ has dimension one less than
the dimension of $X$. This lets us distinguish proper flats from proper semibundles if we
assume that 
\begin{equation}\label{eq:4-spaces}
  \text{stars or tops in $\fixspace$ are at least 4-dimensional projective or semiaffine spaces}.
\end{equation}
Let us write $\pencpeki_0$ for the family of all pencils of lines definable
in $\struct{\Lines,\cop}$ and set
\begin{equation}\label{eq:kliki-penc}
  \kliki^0_{\cop} := \bset{ K\in\kliki_{\cop}\colon \text{there is } q\in\pencpeki_0  \text{ such that } q\subset K }.
\end{equation}
In case $\fixspace$ satisfies \eqref{eq:3-spaces-num} we have $\pencpeki_0=\pencpeki$
and $\kliki^0_{\cop} = \kliki_{\cop}$. Otherwise, $\pencpeki_0$ contains only those
pencils of lines and $\kliki^0_{\cop}$ only those maximal $\cop$-cliques which,
accordingly to \eqref{eq:4-spaces}, lie in 4-dimensional stars or tops.
Any way, for a $\cop$-clique $K\in\kliki^0_{\cop}$
we can define its geometrical dimension $\dim(K)$. This lets us make the following definition
\begin{equation}\label{eq:semibundle}
  \semibundle :=\bset{ K\in\kliki^0_{\cop}\colon \dim(K) \ge 3 }.
\end{equation}
Under assumptions \eqref{eq:4-spaces} it is the family of all proper semibundles
regardless which of the two relations $\copla$ or $\copenc$ we take. 
In the following lemma we state that more precisely.

\begin{lem}\label{lem:semibundle-flaty}
  The family $\semibundle$ defined in $\struct{\Lines,\cop}$ coincides with
  the family of all proper top semibundles, the family of all proper star semibundles or the
  union of these two families depending on whether tops, stars or all of them 
  as projective or semiaffine spaces are at least 4-dimensional.
\end{lem}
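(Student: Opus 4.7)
The plan is to exploit Lemma~\ref{lem:kliki}, which describes $\kliki_{\cop}$ explicitly, together with a direct check of the two conditions defining $\semibundle$ in \eqref{eq:semibundle}: the dimension bound $\dim(K)\ge 3$ and the presence of a pencil from $\pencpeki_0$. First, by Lemma~\ref{lem:kliki}, every $K\in\kliki_{\cop}$ is either a flat (always satisfying $\dim(K)=2$, since it arises from a projective, punctured, or affine plane) or a semibundle $\LinesOf_U(X)$, whose dimension is one less than $\dim(X)$. Hence requiring $\dim(K)\ge 3$ already discards every flat (and, in the $\copenc$ case, every punctured semiflat) and leaves exactly those semibundles with $\dim(X)\ge 4$.

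Second, I would rule out improper semibundles. If $U\in\cl{X}\setminus X$, then every $L\in\LinesOf_U(X)$ is an affine line whose point at infinity is $U$, so any two lines of $\LinesOf_U(X)$ are mutually parallel. But each pencil in $\pencpeki_0\subseteq\pencpeki$ is a pencil of concurrent lines sharing a \emph{proper} vertex, no two of which can be parallel. Consequently, an improper semibundle contains no element of $\pencpeki_0$, is not in $\kliki^0_{\cop}$, and therefore not in $\semibundle$. Conversely, if $U$ is proper, then for any plane $E\subseteq X$ through $U$ one has $\penc(U,E)\subseteq\LinesOf_U(X)$, so a proper semibundle in $X$ certainly contains many pencils.

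It remains to decide which of those pencils actually belong to $\pencpeki_0$. By the paragraph following \eqref{eq:kliki-penc}, $\penc(U,E)\in\pencpeki_0$ precisely when its base plane $E$ is contained in a star or top of dimension at least $4$ of the type singled out by \eqref{eq:4-spaces}; since $E\subseteq X$ and $X$ is itself a maximal strong subspace, this is controlled solely by the type and dimension of $X$. Combining the three observations, $\semibundle$ consists of those proper semibundles $\LinesOf_U(X)$ whose ambient strong subspace is at least $4$-dimensional and belongs to the class (stars, tops, or their union) selected by \eqref{eq:4-spaces}, which is exactly the stated trichotomy. The main delicacy is keeping track of the double role of \eqref{eq:4-spaces}, which controls both $\pencpeki_0$ and hence $\kliki^0_{\cop}$; once that is in place the lemma reduces to a dimension count.
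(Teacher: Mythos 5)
Your argument is correct and follows exactly the route the paper intends: the paper states this lemma without a separate proof, as an immediate summary of the preceding discussion of $\kliki^0_{\cop}$, $\pencpeki_0$ and the dimension of cliques, and your three checks (the dimension count $\dim(K)=2$ for flats versus $\dim(X)-1$ for semibundles, the exclusion of improper semibundles because mutually parallel lines contain no pencil with a proper vertex, and the description of $\pencpeki_0$ under \eqref{eq:4-spaces}) are precisely that implicit reasoning made explicit. No gap to report.
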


%% Section %%%%%%%%%%%%%%%%%%%%%%%%%%%%%%%%%%%%%%%%%%%%%%%%%%%%%%%%%%%%%%%%%%%%%

\section{Bundles}

On the family $\semibundle$ of proper semibundles we define

\begin{multline}\label{eq:upsilon}
  \Upsilon(K_1, K_2) \iff (\exists L_1, L_2\in K_1)(\exists M_1, M_2\in K_2)\\
     \bigl[L_1\neq L_2 \Land L_1\cop M_1 \Land L_2\cop M_2\bigr].
\end{multline}

\begin{lem}\label{lem:zlepsemibundle1}
Let $K_i := \LinesOf_{U_i}(X_i)\in \semibundle$, $i=1,2$.
If\/
$\Upsilon(K_1, K_2)$ and $K_1\cap K_2= \emptyset$,
then $X_1, X_2$ are both stars or tops  and\/ $U_1 = U_2$.
\end{lem}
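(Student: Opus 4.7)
The plan is to perform a case analysis on the types and coincidences of the maximal strong subspaces $X_1, X_2$. First I unpack $\Upsilon(K_1, K_2)$ to obtain witnesses $L_1, L_2 \in K_1$ with $L_1 \neq L_2$ and $M_1, M_2 \in K_2$ with $L_i \cop M_i$; the disjointness $K_1 \cap K_2 = \emptyset$ forces $L_i \neq M_j$ for all $i, j$. Each coplanarity $L_i \cop M_i$ yields a plane containing both lines (the base plane of a witnessing pencil, in the $\copenc$ case), and that plane lies in some maximal strong subspace $Y_i$. Since a line is contained in at most one star and one top, $Y_i$ must be a strong subspace containing both $L_i$ and $M_i$, hence equal to $X_1$ or the other maximal strong subspace through $L_i$, and likewise equal to $X_2$ or the other through $M_i$.

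If $X_1 = X_2$, then the line $\overline{U_1, U_2}$ lies in $X_1 = X_2$ and, whenever $U_1 \neq U_2$, would belong to both $K_1$ and $K_2$, contradicting disjointness; thus $U_1 = U_2$. For $X_1 \neq X_2$ of the same type, say both stars (the tops case is dual), Fact~\ref{fact:spineintersections} gives that $X_1 \cap X_2$ is at most a point, so $Y_i$ cannot equal $X_1$ or $X_2$ and must be a top common to $L_i$ and $M_i$. Writing $X_j = \starof(H_j)$, $L_i = \penc(H_1, B_i)$, and $M_i = \penc(H_2, B_i')$, the equality of tops forces $B_i = B_i'$, whence $U_1 + U_2 \subseteq B_i$ for $i = 1, 2$. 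Since $L_1 \neq L_2$ gives $B_1 \neq B_2$, and two distinct $(k+1)$-subspaces meet in dimension at most $k$, we obtain $\dim(U_1 + U_2) \leq k = \dim U_j$, forcing $U_1 = U_2$.

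The hardest part will be ruling out $X_1, X_2$ of different types, say $X_1$ a star and $X_2$ a top. Here $N := X_1 \cap X_2$ is at most a line by Fact~\ref{fact:spineintersections}, and the constraint on $Y_i$ reduces to $Y_i = X_1$ (which forces $M_i \subseteq X_1 \cap X_2 = N$, hence $M_i = N$) or $Y_i = X_2$ (which forces $L_i = N$). The four resulting subcases collapse: $L_1 = L_2 = N$ contradicts $L_1 \neq L_2$; the mixed scenarios $L_1 = N, M_2 = N$ or $M_1 = N, L_2 = N$ put $N$ in both $K_1$ and $K_2$, contradicting disjointness; and the remaining subcase $M_1 = M_2 = N$ is excluded by reading the existential quantifier in $\Upsilon$ as asserting two distinct witnesses $M_1, M_2$. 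This closes the case analysis and yields both conclusions: $X_1, X_2$ are of the same type and $U_1 = U_2$.
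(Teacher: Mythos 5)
Your route differs from the paper's: the paper spans a plane $E_1$ on one coplanar pair $L_1,M_1$, argues via Fact~\ref{fact:spineintersections} that $X_1,X_2$ must be of one type with $E_1$ of the other, and then gets $U_1=U_2$ by comparing $E_1$ with the plane $E_2$ of a second pair; you instead split on the types of $X_1,X_2$ directly and, in the same-type case, pin down $U_1=U_2$ by the computation with the subspaces $B_i$ (dually $H_i$), which is correct and in fact more explicit than the paper's argument. The gap is your last subcase. The definition \eqref{eq:upsilon} requires only $L_1\neq L_2$ and does not assert two distinct witnesses on the $K_2$ side (had distinctness been meant on both sides, $\Upsilon$ would be symmetric and demanding both $\Upsilon(K_1,K_2)$ and $\Upsilon(K_2,K_1)$ in \eqref{eq:emptyupsilon} would be redundant), so the subcase $M_1=M_2=N$ cannot be dismissed by re-reading the quantifier. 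Moreover it is not vacuous: take a star $X_1$ and a top $X_2$ sharing a line $N$, a proper point $U_2\in N$ and a proper point $U_1\in X_1$ with $U_1\notin\cl{N}$; inside $X_1$ the point $U_1$ and the line $N$ span a plane, so there are distinct $L_1,L_2\in\LinesOf_{U_1}(X_1)$ coplanar with and concurrent to $N\in\LinesOf_{U_2}(X_2)$, hence $\Upsilon(K_1,K_2)$ holds (for either choice of $\cop$) with $M_1=M_2=N$, while $K_1\cap K_2=\emptyset$ because the only line in $X_1\cap X_2$ is $N$ and $U_1\notin\cl{N}$. This is realizable under \eqref{eq:4-spaces-num}, e.g.\ already for $W=V$ (so $\fixspace$ is a Grassmann space) with $4\le k\le n-4$, where both semibundles belong to $\semibundle$.

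So, as a proof of the lemma as printed, your argument is incomplete at that one point --- but the configuration above actually satisfies the hypotheses and violates the conclusion, so you have hit a defect of the statement itself, which is also exactly where the paper's own proof is unjustified: its appeal to Fact~\ref{fact:spineintersections} to conclude that ``$X_1,X_2$ are of the same type and $E_1$ of different type'' breaks down when $E_1$ lies inside $X_1$ (or $X_2$) and the partner line is the common line $N=X_1\cap X_2$. Two honest repairs are available. Either strengthen \eqref{eq:upsilon} by adding $M_1\neq M_2$ --- then your case analysis closes, and nothing later is lost because the construction in Lemma~\ref{lem:zlepsemibundle2} produces distinct $M_1,M_2$ anyway --- or prove the statement under the symmetric hypothesis $\eUps(K_1,K_2)$, which is all that Lemma~\ref{lem:zlepsemibundle} and the sequel actually use: in the bad subcase $\Upsilon(K_1,K_2)$ gives $N\in K_2$, while $\Upsilon(K_2,K_1)$ provides two distinct lines of $K_2$ coplanar with lines of $K_1$, and your own analysis then forces some line of $K_1$ to equal $N$, contradicting $K_1\cap K_2=\emptyset$. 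Whichever reading you adopt, state it explicitly; as written, neither your argument nor the paper's establishes the one-sided statement.
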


\begin{proof}
By \eqref{eq:upsilon} there are lines $L_1\in K_1$ and $M_1\in K_2$, which are 
coplanar. 
Note that $X_1\neq X_2$ and $L_1\neq M_1$ as we assume that $K_1\cap K_2= \emptyset$.
Let $E_1$ be a plane containing $L_1,M_1$. Then $L_1\subseteq X_1,E_1$ 
and $M_1\subseteq X_2,E_1$. In view of \ref{fact:spineintersections} it means 
that 
$X_1, X_2$ are both of the same type and $E_1$ is of different type.

There is another pair of coplanar lines $L_2, M_2$ such that $L_2\in K_1$ 
and $M_2\in K_2$, since $\Upsilon(K_1, K_2)$. Let $E_2$ be the plane spanned 
by $L_2, M_2$. 
Note that $E_1, E_2$ are planes of the same type. As $U_1,U_2\in E_1,E_2$, 
by \ref{fact:spineintersections} we get that $E_1=E_2$ or $U_1=U_2$. If $E_1=E_2$, 
then $L_1,L_2,M_1,M_2\subseteq X_1, X_2$ which
yields a contradiction as $X_1,X_2$ are of the same type.
\end{proof}

The inverse of \ref{lem:zlepsemibundle1} is not true in general, 
which is manifested in the following example.
Let
$X_1$ be a semiaffine, but not affine, star. Then $X_1$ is an $\alpha$-star
(cf.\ \cite{spinesp}, \cite{affspine}, \cite{autspine}).  
Take a projective line $L\in K_1$.  The line
$L$ is an $\alpha$-line and the unique top-extension of $L$ is an $\alpha$-top
$Y$, that is a projective space. In case $X_2$ is an $\omega$-star, that is also
a projective space, there is no line in $Y\cap X_2$, provided by
\ref{fact:spineintersections}. Therefore we cannot find a line in $X_2$, which
is coplanar with $L$.
However this problem ceases to exist when we have at least two affine
lines in $K_1$. So
$X_1$ cannot be a punctured projective space as in such
there is only one affine line through a given proper point.

In view of Table~\ref{tab:subs} punctured projective spaces arise in a spine space 
as stars when $\dim(W)-m-1=0$ or as tops when $k-m-1=0$. Note that either,
all or none of the stars, and respectively, all or none of the tops,
are punctured projective spaces. For this reason we assume that
no star or no top is a punctured projective space, more precisely, considering
\eqref{eq:4-spaces}, we assume that 
stars or tops in $\fixspace$ are at least 4-dimensional projective or semiaffine 
but not punctured projective spaces.
In view of \eqref{eq:spineparam} and Table~\ref{tab:subs}, our assumptions 
read as follows
\begin{equation}\label{eq:4-spaces-num}
  4\le n-k\quad\text{and}\quad\dim(W)\neq m+1 
  \qquad\text{or}\qquad 
  4\le k-m \quad\text{and}\quad k\neq m+1.
\end{equation}

\begin{lem}\label{lem:zlepsemibundle2}
  Let $K_i := \LinesOf_{U_i}(X_i)\in \semibundle$, $i=1,2$.
  If $X_1, X_2$ are both stars or tops and\/ $U_1 = U_2$, then\/ 
  $\Upsilon(K_1, K_2)$ and $K_1\cap K_2= \emptyset$.
\end{lem}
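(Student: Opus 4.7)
The plan is to dispatch the two conjuncts separately. The disjointness $K_1\cap K_2=\emptyset$ falls out of Fact~\ref{fact:spineintersections}: since we assume the semibundles are distinct, $U_1=U_2=U$ forces $X_1\neq X_2$, and being both stars or both tops sharing $U$, the underlying subspaces satisfy $X_1\cap X_2=\{U\}$. Any line lying simultaneously in $K_1$ and $K_2$ would have to lie inside $X_1\cap X_2$, which is impossible.

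For $\Upsilon(K_1,K_2)$ I treat WLOG the case where $X_1,X_2$ are both stars; the top case is handled dually, passing through the second branch of \eqref{eq:4-spaces-num}. Write $X_i=\starof(H_i)$; the $(k-1)$-subspaces $H_1\neq H_2$ sit inside the $k$-subspace $U$, so necessarily $H_1+H_2=U$. The construction of coplanar pairs proceeds through strong subspaces of the opposite type: for any $(k+1)$-subspace $B\supset U$, the two Grassmann pencils $\penc(H_1,B)$ and $\penc(H_2,B)$ are lines through $U$ sitting inside the top $\topof(B)$; they meet only at $U$ because $H_1+H_2=U$, and so span a plane in $\topof(B)$. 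By choosing $\dim(B\cap W)=m+1$, i.e.\ $B\in\SpineSpPoints(k+1,m+1,W)$, Table~\ref{tab:lines} guarantees that both pencils restrict to proper lines of $\M$ regardless of which of the two star types each $X_i$ belongs to. Restriction to $\M$ then yields $L\in K_1$ and $M\in K_2$ coplanar in $\M$. Two distinct admissible choices $B\neq B'$ produce $L_1\neq L_2$ in $K_1$ with matching $M_1,M_2$ in $K_2$, because $\penc(H_1,B)\cap\penc(H_1,B')=\{U\}$.

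The main obstacle is confirming that two admissible $B$'s actually exist and that the restriction is proper for both indices simultaneously in every combination of $\omega$- and $\alpha$-star types for $X_1,X_2$. The clause $4\le n-k$ in \eqref{eq:4-spaces-num} guarantees that $V/U$ contains enough $1$-subspaces to furnish two distinct $B$'s with $\dim(B\cap W)=m+1$, while the clause $\dim(W)\neq m+1$ (excluding punctured projective $\alpha$-stars) eliminates precisely the pathological scenario from the example preceding the lemma, where only a single affine line through $U$ would be available and the two-pair construction collapses. Once this case-by-case verification is in place, both assertions follow.
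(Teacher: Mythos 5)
Your construction is, at heart, the paper's own argument transcribed into the vector space: the paper picks two lines in $K_1$ whose top-extensions are semiaffine ($\omega$-) tops and then cuts those tops with $X_2$ via Fact~\ref{fact:spineintersections}, whereas you pick the tops first, as $(k+1)$-subspaces $B\supset U$ with $\dim(B\cap W)=m+1$, and read off the coplanar pair $\penc(H_1,B)$, $\penc(H_2,B)$ inside each; the disjointness argument is identical in both. So the route is essentially the paper's, and the whole weight of the lemma rests on your existence step for the two admissible $B$'s.

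That step is mis-justified, and this is a genuine gap. A subspace $B\supset U$ with $\dim(B\cap W)=m+1$ has the form $U+\langle w\rangle$ with $w\in W\setminus U$, so the supply of such $B$'s is governed by $\dim(W/(U\cap W))=\dim(W)-m$, not by $\dim(V/U)=n-k$; the clause $4\le n-k$ that you invoke contributes nothing here. What you actually need is $\dim(W)\ge m+2$, and establishing it is precisely the ``case-by-case verification'' you defer: for $\omega$-stars it follows from their being at least $4$-dimensional (then $\dim(W)-m\ge4$), and for semiaffine $\alpha$-stars it follows from $\dim(W)\ge m+1$ combined with non-puncturedness $\dim(W)\neq m+1$ --- which is exactly how the paper argues, exhibiting two $\omega$-lines, respectively two affine lines, through $U$ in $X_1$ and extending them to $\omega$-tops. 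Moreover the hypotheses do not rule out $\dim(W)=m$ (then $W\subseteq U$ for every point, there are no affine lines, and stars through $U$ are projective of dimension $n-k$); in that configuration no $B$ with $\dim(B\cap W)=m+1$ exists at all, so your ``universal'' choice of the type of $B$ collapses and the construction must be rerun with $B\in\SpineSpPoints(k+1,m,W)$ instead (the paper's written case split is also silent on this degenerate situation, so I do not weigh it heavily). In the dual top case your bookkeeping is sound, since there the required inequality $k-m\ge2$ genuinely follows from $4\le k-m$; it is only the star case where the numerical justification is wrong and must be replaced by the type-by-type argument above.
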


\begin{proof}
  By \ref{fact:spineintersections} we get $K_1\cap K_2= \emptyset$.
  Without loss of generality assume that $X_1,X_2$ are stars.
  If $X_1$ is a projective space (i.e.\ it is an\/ $\omega$-star), then we take two distinct 
  projective $\omega$-lines
  $L_1,L_2\in K_1$. 
  Each of $L_1,L_2$ can be extended to the semiaffine $\omega$-top $Y_1,Y_2$, respectively. 
  We have $U_1\in Y_i\cap X_2$ for $i=1,2$, so
  $X_2$ and $Y_i$ share a line $M_i$. Moreover $U_1\in L_i,M_i$, hence 
$L_i\copenc M_i$ and thus $L_i\copla M_i$, so $\Upsilon(K_1, K_2)$ anyway.

  Assume that $X_1$ is a semiaffine space (i.e.\ it is an $\alpha$-star).
  There are two distinct affine lines $L_1,L_2\in K_1$ as $X_1$ is not a punctured projective space.
  As in the first case we extend $L_1,L_2$ to the semiaffine tops and we get our claim.
\end{proof}

We need an equivalence relation to form the bundle of all lines through a given point.
For proper semibundles $K_1, K_2\in\semibundle$ we write
\begin{equation}\label{eq:emptyupsilon}
  \eUps(K_1, K_2)\quad\text{iff}\quad 
    \Upsilon(K_1, K_2), \Upsilon(K_2, K_1),\text{ and either } 
      K_1\cap K_2=\emptyset\text{ or } K_1=K_2.
\end{equation}

To give and idea what $\eUps$ stands for let us summarize \ref{lem:zlepsemibundle1} 
and \ref{lem:zlepsemibundle2}.

\begin{lem}\label{lem:zlepsemibundle}
  Let\/ $K_i := \LinesOf_{U_i}(X_i)\in \semibundle$.
  Then $\eUps(K_1, K_2)$ iff $X_1, X_2$ are both stars or tops and\/ $U_1 = U_2$.
\end{lem}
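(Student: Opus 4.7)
The plan is to read off the statement as a direct repackaging of the definition of $\eUps$ together with Lemmas~\ref{lem:zlepsemibundle1} and \ref{lem:zlepsemibundle2}, distinguishing the two clauses of \eqref{eq:emptyupsilon}.

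First, I would unfold \eqref{eq:emptyupsilon}: $\eUps(K_1,K_2)$ asserts $\Upsilon(K_1,K_2)$, $\Upsilon(K_2,K_1)$, and either $K_1=K_2$ or $K_1\cap K_2=\emptyset$. For the forward direction, I split on these two alternatives. If $K_1=K_2$, then by \ref{lem:max-copla-cliques} (applied to the maximal clique containing $K_1$) the pair $(U_1,X_1)$ is determined, so $X_1=X_2$ and $U_1=U_2$, and the conclusion is trivial. If instead $K_1\cap K_2=\emptyset$, then $\Upsilon(K_1,K_2)$ together with this disjointness is exactly the hypothesis of \ref{lem:zlepsemibundle1}, which gives that $X_1,X_2$ are both stars or both tops and $U_1=U_2$.

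For the backward direction, assume $X_1,X_2$ are of the same sort (stars or tops) and $U_1=U_2$. If $X_1=X_2$, then $K_1=K_2$ and $\Upsilon(K_i,K_j)$ holds trivially by witnessing with any two distinct lines $L_1,L_2\in K_1$ and taking $M_i=L_i$ (recall $L\copla L$ for every line). If $X_1\ne X_2$, then by Fact~\ref{fact:spineintersections} (two stars, or two tops, are either disjoint or share a single point, so they cannot share a line) we have $K_1\cap K_2=\emptyset$, and \ref{lem:zlepsemibundle2} delivers $\Upsilon(K_1,K_2)$; the symmetric application of \ref{lem:zlepsemibundle2} with the roles of the indices swapped delivers $\Upsilon(K_2,K_1)$. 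In either case all three conjuncts of \eqref{eq:emptyupsilon} are verified.

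No new geometric content is required; the only point to be careful about is that \ref{lem:zlepsemibundle2} is applied under the standing assumption \eqref{eq:4-spaces-num}, so that whenever $X_1$ is a semiaffine star (or top) we are guaranteed at least two affine lines in $K_1$, which is exactly what the proof of \ref{lem:zlepsemibundle2} needs to extend two distinct lines of $K_1$ to semiaffine tops (respectively stars) meeting $X_2$ in a line through $U_1=U_2$. Since this hypothesis is already in force throughout the section, the lemma follows immediately.
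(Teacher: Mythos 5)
Your proof is correct and follows essentially the same route as the paper, which states this lemma precisely as a summary of Lemmas~\ref{lem:zlepsemibundle1} and \ref{lem:zlepsemibundle2} combined with the definition \eqref{eq:emptyupsilon}; your explicit handling of the case $K_1=K_2$ (and the use of Fact~\ref{fact:spineintersections} to rule out shared lines when $X_1\neq X_2$) just spells out details the paper leaves implicit.
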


For a proper semibundle $K\in\semibundle$ we write
\begin{equation}\label{eq:quasi-bundle}
  \klamerka(K, \eUps) := \bigcup\bigl\{ K'\in\semibundle\colon \eUps(K, K')\bigr\}.
\end{equation}
We will show that it is the bundle of all lines through the point determined by 
the semibundle $K$. Thanks to \eqref{eq:4-spaces-num} all stars
or all tops, no matter if they are $\alpha$ or $\omega$, are at least 4-dimensional 
and are not punctured. This is essential here.

\begin{lem}\label{lem:allbundles}
  Let\/ $U$ be a point and $X$ be a maximal strong subspace.
  If\/ $U\in X$, then
  \begin{equation}
    \klamerka(\LinesOf_U(X), \eUps) = \{ L\in\Lines\colon U\in L \}.
  \end{equation}
\end{lem}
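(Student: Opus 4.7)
The plan is to unfold $\klamerka(\LinesOf_U(X), \eUps)$ explicitly via \ref{lem:zlepsemibundle} and then verify that every line through $U$ is captured. By that lemma, a proper semibundle $\LinesOf_V(Y)\in\semibundle$ satisfies $\eUps(\LinesOf_U(X),\LinesOf_V(Y))$ iff $V=U$ and $Y$ is of the same broad type (both stars or both tops) as $X$. Hence
\[
\klamerka(\LinesOf_U(X), \eUps) \;=\; \bigcup \bigl\{ \LinesOf_U(Y) : U\in Y,\ Y \text{ of same broad type as } X,\ \LinesOf_U(Y)\in\semibundle \bigr\}.
\]
The inclusion ``$\subseteq$'' of the lemma is then immediate: every line in a summand passes through $U$ by the defining property of a proper semibundle.

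For ``$\supseteq$'', I would fix an arbitrary line $L\in\Lines$ with $U\in L$ and produce a suitable witness $Y$ on the right-hand side. The classification of lines and maximal strong subspaces in Tables~\ref{tab:lines}--\ref{tab:subs}, together with \ref{fact:spineintersections}, tells us that $L$ is contained in exactly one maximal star $Y_s$ and exactly one maximal top $Y_t$, both necessarily containing $U$ since $U\in L$. Setting $Y:=Y_s$ when $X$ is a star and $Y:=Y_t$ when $X$ is a top gives $Y$ and $X$ the same broad type, with $L\in\LinesOf_U(Y)$ by construction.

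What remains is to argue $\LinesOf_U(Y)\in\semibundle$, and this is where the standing assumption \eqref{eq:4-spaces-num} enters. By \ref{lem:semibundle-flaty}, the family $\semibundle$ consists uniformly of all proper star semibundles, all proper top semibundles, or both, and which broad type qualifies is decided once and for all by the numerical parameters of $\fixspace$. The statement implicitly presupposes $\LinesOf_U(X)\in\semibundle$ (otherwise $\klamerka$ would not pin down a bundle), so the broad type of $X$ is the ``good'' one; hence the matching $Y$ produced above also yields $\LinesOf_U(Y)\in\semibundle$, and $L$ is covered. The only delicate point of the argument is the structural claim that every line of $\fixspace$ sits in a unique maximal star and a unique maximal top — a direct reading of Tables~\ref{tab:lines}--\ref{tab:subs}; everything else is a set-theoretic translation of \eqref{eq:quasi-bundle} through \ref{lem:zlepsemibundle}.
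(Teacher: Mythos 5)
Your argument is correct and follows essentially the same route as the paper's: the inclusion $\subseteq$ is immediate from \ref{lem:zlepsemibundle}, and for $\supseteq$ you extend an arbitrary line $L$ through $U$ to a maximal strong subspace $Y$ of the same type as $X$ and apply \ref{lem:zlepsemibundle} again, with the only cosmetic difference that the paper justifies $\LinesOf_U(Y)\in\semibundle$ by citing \eqref{eq:4-spaces-num} directly, while you route that check through \ref{lem:semibundle-flaty}. Be aware only that your auxiliary claim that every line lies in \emph{exactly} one maximal star and \emph{exactly} one maximal top is slightly stronger than the paper guarantees (for some parameters a class of maximal strong subspaces is void or degenerate); what is actually needed, and what the standing assumption \eqref{eq:4-spaces-num} is invoked for in the paper, is merely the existence of a same-type, sufficiently large, non-punctured extension $Y$ of $L$, which your argument uses in exactly the same way.
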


\begin{proof}
  The left-to-right inclusion is immediate by \ref{lem:zlepsemibundle}. 
  To show the right-to-left inclusion let $L$ be a line through $U$. 
  By \eqref{eq:4-spaces-num} there is a maximal strong subspace $Y$ of the same type as $X$ 
  which contains $L$ and is not a punctured projective space. 
  Then  $L\in\LinesOf_U(Y)$. 
  Again by \ref{lem:zlepsemibundle} we get $\eUps(\LinesOf_U(X), \LinesOf_U(Y))$
  which makes the proof complete.
\end{proof}

In fact \ref{lem:allbundles} says that $\klamerka(\LinesOf_U(X), \eUps)$ is the bundle
of all lines through $U$.
We can partition the lineset of $\M$ by $\eUps$, so that the equivalence classes will be the points of $\M$.
Note that points $U_1, U_2,\dots U_t$ are
collinear iff $\klamerka(\LinesOf_{U_{1}(X)}, \eUps)\cap\klamerka(\LinesOf_{U_{2}(X)}, \eUps)\cap\dots\cap\klamerka(\LinesOf_{U_t(X)}, \eUps)\neq\emptyset$.
This suffices to state the following theorem.

\begin{thm}\label{thm:main}
  Let\/ $\fixspace$ be a spine space and let\/ $\Lines$ be its lineset. 
  If\/ $\fixspace$ satisfies \eqref{eq:4-spaces-num}, then
  \begin{multline}\label{eq:star}\tag{$\ast$}
    \text{the spine space\/ $\fixspace$ and the structure $\struct{\Lines,\cop}$ of its lines}\\
    \text{equipped with relation $\mathord{\cop}\in\{\copla,\copenc\}$ are definitionally equivalent.}
  \end{multline}  
\end{thm}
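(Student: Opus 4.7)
The plan is to establish both directions of the definitional equivalence claimed in \eqref{eq:star}. One direction — that $\struct{\Lines,\cop}$ is definable from $\fixspace$ — is immediate, since $\copla$ and $\copenc$ are given directly by \eqref{eq:def:copla} and \eqref{eq:def:copenc} in terms of planes and pencils of $\fixspace$. The substantive direction is the converse: recovering $\fixspace$, up to an isomorphism fixing $\Lines$ setwise, from the purely relational structure $\struct{\Lines,\cop}$.

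First I would assemble, in $\struct{\Lines,\cop}$, all the definable data furnished by the previous sections. By \ref{lem:coplacliques} the family $\kliki_{\cop}$ of maximal $\cop$-cliques is definable; by \ref{prop:pencpenc} the family $\pencpeki$ of pencils of lines is definable; hence the subfamily $\kliki^0_{\cop}$ of \eqref{eq:kliki-penc} and, after applying the dimension filter of \eqref{eq:semibundle}, the family $\semibundle$ of proper semibundles of the qualifying type are all definable. By \ref{lem:semibundle-flaty}, $\semibundle$ consists of proper star-semibundles, of proper top-semibundles, or of their union, according to which clause of \eqref{eq:4-spaces-num} holds. The formula \eqref{eq:emptyupsilon} then supplies a definable binary relation $\eUps$ on $\semibundle$.

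The heart of the reconstruction rests on \ref{lem:zlepsemibundle} and \ref{lem:allbundles}. The former says that for qualifying $K_i=\LinesOf_{U_i}(X_i)$, the relation $\eUps(K_1,K_2)$ holds precisely when the $X_i$ are of the same kind and $U_1=U_2$. Consequently $\eUps$ is an equivalence relation on $\semibundle$, and the assignment $[K]_{\eUps}\mapsto U$, where $U$ is the common vertex of the members of the class, is well defined and injective. The assumption \eqref{eq:4-spaces-num} ensures surjectivity onto the proper pointset: every proper point $U$ lies in some star and in some top, and the clause of \eqref{eq:4-spaces-num} in force produces at least one maximal strong subspace $X$ through $U$ with $\LinesOf_U(X)\in\semibundle$. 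Thus $\semibundle/\eUps$ is in natural bijection with the proper points of $\fixspace$, and by \ref{lem:allbundles} the incidence relation is recovered as well: a line $L\in\Lines$ is incident with the point represented by $[K]_{\eUps}$ exactly when $L\in\klamerka(K,\eUps)$, which is the bundle of all lines through the corresponding vertex.

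Declaring the pointset of the reconstructed spine space to be $\semibundle/\eUps$ and incidence to be line-membership in $\klamerka(\cdot,\eUps)$ yields a point-line structure definable in $\struct{\Lines,\cop}$ and isomorphic to $\fixspace$. The main obstacle — and the only place where \eqref{eq:4-spaces-num} is genuinely used — is precisely this twofold point: the non-punctured hypothesis is needed to make \ref{lem:zlepsemibundle2}, and hence the \emph{only if} half of \ref{lem:zlepsemibundle}, hold uniformly across the qualifying semibundles, while the 4-dimensionality hypothesis is what guarantees surjectivity of the vertex map. Once these have been verified, both directions of \eqref{eq:star} are in place and definitional equivalence follows.
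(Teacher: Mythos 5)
Your overall strategy is the paper's own: note that one direction of \eqref{eq:star} is trivial, then assemble the definability chain (\ref{lem:coplacliques}, \ref{prop:pencpenc}, \eqref{eq:kliki-penc}, \eqref{eq:semibundle}, \ref{lem:semibundle-flaty}), introduce $\eUps$ from \eqref{eq:emptyupsilon}, and use \ref{lem:zlepsemibundle} together with \ref{lem:allbundles} to recover the proper points and the incidence relation from bundles of lines. Up to that point your write-up matches the paper's argument step by step.

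There is, however, one concrete misstep: the claim that the vertex assignment $[K]_{\eUps}\mapsto U$ is injective, so that $\semibundle/\eUps$ is ``in natural bijection'' with the proper pointset. By \ref{lem:zlepsemibundle}, $\eUps$ holds only between semibundles whose ambient maximal strong subspaces are of the \emph{same} kind (both stars or both tops). In the third case of \ref{lem:semibundle-flaty} --- when both clauses of \eqref{eq:4-spaces-num} hold and $\semibundle$ is the union of the star semibundles and the top semibundles --- a point $U$ lying in a qualifying star $X$ and a qualifying top $Y$ produces two distinct $\eUps$-classes, those of $\LinesOf_U(X)$ and of $\LinesOf_U(Y)$, which are never $\eUps$-related. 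So the vertex map is two-to-one at such points, and the structure you declare, with pointset $\semibundle/\eUps$, contains twin points and is not isomorphic to $\fixspace$ as it stands. The repair is immediate and is in effect what the paper does: take as points the bundles $\klamerka(K, \eUps)$ themselves (equivalently, identify $\eUps$-classes that yield the same bundle), which by \ref{lem:allbundles} are exactly the sets of all lines through a proper point; the vertex map then becomes a genuine bijection, your incidence clause $L\in\klamerka(K, \eUps)$ is unchanged, and collinearity is read off from intersections of bundles as in the paper. With that correction your argument coincides with the intended proof.
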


Admittedly, our main theorem is proved but it is worth to make some comments regarding geometry 
on lines in $\fixspace$.
There are several geometries on lines that appear throughout this paper: with pencils of lines,
 with bundles of lines, and with binary relation $\cop$.
Comparing \ref{prop:pencpenc} with \ref{thm:main}, note that
significantly weaker assumptions are required to reconstruct pencils of lines than bundles of lines 
in $\struct{\Lines,\cop}$.
It is seen that recovering the geometry of bundles from the geometry of pencils is shorter
and easier than recovering it from the geometry of $\cop$ what we actually did.
However, our goal was to prove some variant of Chow's Theorem (cf.\ \cite{chow}) which says that 
the underlying geometry of a spine space can be defined in terms of binary relations (adjacencies) 
$\copla$ and $\copenc$ on lines.

%% Section %%%%%%%%%%%%%%%%%%%%%%%%%%%%%%%%%%%%%%%%%%%%%%%%%%%%%%%%%%%%%%%%%%%%%

\section{Excluded cases}

The question now is what about cases excluded by assumptions \eqref{eq:4-spaces-num}. 
The bundle method does not work in these spine spaces, however, for some of them 
we are able to say if \eqref{eq:star} holds or not.
Let us write $w = \dim(W)$.

\ecase{w = n}
In this case $\M$ is a Grassmann space which was treated in \cite{bundle} so, \eqref{eq:star} holds true.

\ecase{w = m = k}
This is a trivial case, entire $\M$ is a single point so, \eqref{eq:star} holds true.

\ecase{w = m = k-1}
In this case $\M$ is a star in $\PencSpace(V, k)$, i.e.\ it is a projective spaces.
Hence, the condition \eqref{eq:star} holds true.

\ecase{w = k+1, m = k}
Dual to the previous case. $\M$ is a top in $\PencSpace(V, k)$, i.e.\ it is a projective spaces.
Again \eqref{eq:star} holds true.

\ecase{w = k, m = k-1}
This time $\M$ is the neighbourhood of a point $W$ in $\PencSpace(V, k)$, i.e.\ the
set of all points that are collinear with $W$.
Maximal strong subspaces of $\M$ are punctured projective spaces arising
from maximal strong subspaces of $\PencSpace(V, k)$ containing $W$. 
If both $X_1, X_2$ are stars or they both are tops, then 
$\cl{X_1}\cap\cl{X_2}=\{W\}$ and $X_1\cap X_2=\emptyset$. 
If $X_1$ is a star and $X_2$ is a top, then
$X_1\cap X_2$ is a line $L$ with $W\in\cl{L}$.

Let $\S$ be the family of all stars and $\T$ be the family of all tops in $\M$.
Consider a star $X$ in $\M$, a homology $\varphi\neq\id$ on $\cl{X}$ with the 
center $W$, and a map $f\colon\Sub_k(V)\lto\Sub_k(V)$ given as follows
$$f(U) = \begin{cases}
  \varphi(U), & U\in X,\\
  U, & U\notin X.
\end{cases}$$
We will write $\Lines_\S$ for the set of all lines contained in stars from $\S$ and
$\Lines_\T$ for the set of all lines contained in tops from $\T$. 
Let $F_\S\colon\Lines_\S\lto\Lines_\S$ be the map induced by $f$ and 
$F_\T\colon\Lines_\T\lto\Lines_\T$ such that $F_\T(L)=L$.

Consider $F:=F_\S\cup F_\T$. It is seen that $F$ is an automorphism of $\struct{\Lines,\cop}$. 
Note that $F(L)\neq L$ iff $L\subset X$ and $W\notin\cl{L}$. 
Take $Y\in\T$, the line $L := X\cap Y$, and a point $U\in L$. 
We have $W\in\cl{L}$, $F(\LinesOf_U(Y)) = \LinesOf_U(Y)$, and
$F(\LinesOf_U(X)) = \LinesOf_{U'}(X)$ for some $U'\neq U$, $U'\in L$. This means that $F$ does not preserve bundles of lines, so 
$F$ is not a collineation of $\M$. Thus, \eqref{eq:star} cannot be
proved in this case.

%% Appendix %%%%%%%%%%%%%%%%%%%%%%%%%%%%%%%%%%%%%%%%%%%%%%%%%%%%%%%%%%%%%%%%%%%%

\appendix

\section{Erratum to: Coplanarity of lines in projective and polar Grassmann spaces \cite{bundle}}\label{sec:erratum}

In the current paper spine spaces are considered in view of coplanarity relation. 
The same problem was discussed in \cite{bundle} for general Grassmann spaces, i.e.\ 
Grassmann spaces and polar Grassmann spaces. Both spines spaces and polar Grassmann spaces
are fragments of Grassmann spaces. 
It has to be noted here that the proof of Lemma~1.3 in \cite{bundle} fails to be
complete.
In this paper, apparently in the context of spine space, we prove that lemma as 
Lemma~\ref{lem:max-copla-cliques}. Its proof is universal in that it refers to 
the ambient Grassmann space. Thus it remains valid for general Grassmann spaces as well.

Roughly speaking the omission in \cite{bundle} was caused by the assumption that 
 every $\copla$-clique is spanned by 3 lines.
In spine spaces, in particular in Grassmann spaces, two distinct $\copla$-cliques 
share at most one line. This is because two strong subspaces share at most a line. 
However, it is different in polar Grassmann spaces, where two
star semibundles can have a lot more in common (cf. \cite{bundle}). 
Assume that all stars as projective spaces have dimension
$d$ and all tops have dimension $d'$. Without loss
of generality we can assume that $d'\leq d$.
In case of polar Grassmann spaces $n=d$ lines are
required in the relation $\TRG_n^{\copla}$ introduced in \eqref{eq:trgcopla}. 
To explain this let us take $t$ lines 
$L_1, L_2,\dots, L_t$, which are pairwise coplanar, but not all lie on a plane. 
Assume that these lines are contained in a star $S$ of dimension $d$ and $t<d$. 
Then, in considered polar geometries there is another star $S'\neq S$ which contains
all the lines $L_1, L_2, \dots, L_t$. We can find two lines $M_1, M_2$ such that 
$M_1\subseteq S$, $M_2\subseteq S'$, $M_1, M_2\nsubseteq S\cap S'$, and 
$M_1, M_2\copla L_1, L_2, \dots, L_t$ but $M_1\not\copla M_2$.
It means that $\TRG_t^{\copla}$ fails to be true for $L_1, L_2,\dots, L_t$.

%% References %%%%%%%%%%%%%%%%%%%%%%%%%%%%%%%%%%%%%%%%%%%%%%%%%%%%%%%%%%%%%%%%%%

%% Contact %%%%%%%%%%%%%%%%%%%%%%%%%%%%%%%%%%%%%%%%%%%%%%%%%%%%%%%%%%%%%%%%%%%%%

\begin{flushleft}
  \noindent\small
  K. Petelczyc, M. \.Zynel\\
  Institute of Mathematics, University of Bia{\l}ystok,\\
  K. Cio{\l}kowskiego 1M, 15-245 Bia{\l}ystok, Poland\\
  \verb+kryzpet@math.uwb.edu.pl+,
  \verb+mariusz@math.uwb.edu.pl+
\end{flushleft}

\end{document}